\newcommand{\bi}{\mathsf{BI}} % brownian interlacements
\newcommand{\be}{\mathsf{BE}} % brownian excursions
\newcommand{\geo}{\overset{g}{\leftrightarrow}} %geodesic
\newcommand{\capac}{\ensuremath{\mathrm{cap}}} %capacity
\newcommand{\W}{\mathcal{W}} % quotient space
\newcommand{\h}{{\mathbb H}^2} %hyperbolic space
\newcommand{\D}{\mathbb{D}} %unit disk
\newcommand{\Pm}{\mathbb{P}}
\newcommand{\Om}{\Omega}
\newcommand{\M}{\mathcal{M}}% Arbitrary sigma algebra
\newcommand{\E}{ \mathbb{E}}
\newcommand{\eqdist}{\overset{d}{=}} %equal in distribution
\newcommand{\pspace}{\mathcal{P}} %space of probability measures
\newcommand{\N}{\mathbb{N}}
\newcommand{\Z}{\mathbb{Z}}
\newcommand{\R}{\mathbb{R}}
\newcommand{\C}{\mathbb{C}}
\newcommand{\B}{\mathcal{B}} % Borel sigma
\newcommand{\V}{\mathcal{V}}
\newcommand{\remark}{\noindent \textbf{Remark. }}
\newcommand{\id}{\mathrm{d}}
\newcommand{\im}{\mathrm{i}}
\newcommand{\e}{\mathrm{e}}
\newcommand{\supp}{\mathrm{supp}\,}
\newcommand{\dist}{\mathrm{dist}}
\newcommand{\imag}{\mathrm{Im} \,}
\newcommand{\re}{\mathrm{Re} \, }
\newtheorem{thm}{Theorem}
\newtheorem{lem}{Lemma}
\newtheorem{prop}{Proposition}
\newtheorem*{thm*}{Theorem}
\theoremstyle{definition}
\numberwithin{thm}{section}
\numberwithin{lem}{section}
\numberwithin{prop}{section}
\numberwithin{cor}{section}
\newcommand{\pvis}{P_{\mbox{vis}}}
\begin{document}

%\listoftodos

\pagenumbering{arabic}
\title{\bf{Visibility in the vacant set of the Brownian interlacements and the Brownian excursion process} \rm}

\author{Olof Elias\footnote{Department of Mathematics, Chalmers University of Technology and Gothenburg University, Sweden. E-mail: olofel@chalmers.se} \and Johan Tykesson\footnote{Department of Mathematics, Chalmers University of Technology and Gothenburg University, Sweden. E-mail: johant@chalmers.se. Research supported by the Knut and Alice Wallenberg foundation.} }

\date{\today}

\maketitle
\thispagestyle{empty}

\begin{abstract}
We consider the Brownian interlacements model in Euclidean space, introduced by A.S. Sznitman in \cite{sznitman2013scaling}. We give estimates for the asymptotics of the visibility in the vacant set. We also consider visibility inside the vacant set of the Brownian excursion process in the unit disc and show that it undergoes a phase transition regarding visibility to infinity as in \cite{benjamini2009visibility}. Additionally, we determine the critical value and that there is no visibility to infinity at the critical intensity.
\end{abstract}
%% header
\pagestyle{fancy}
\setlength{\headheight}{14pt} 
\fancyhf{}
\lhead{Visibility in Brownian interlacements}
\cfoot{\thepage}
%\rhead{Olof Elias}
%\lhead{FKG inequality and visibility in the Brownian interlacements model}
\cfoot{\thepage}
\section{Introduction}
In this paper, we study visibility inside the vacant set of two percolation models; the Brownian interlacements model in ${\mathbb R}^d$ ($d\ge 3$), and the Brownian excursion process in the unit disc. Below, we first informally discuss Brownian interlacements model and our results for that model, and then we move on the Brownian excursions process.

The Brownian interlacements model is defined as a Poisson point process on the space of doubly infinite continuous trajectories modulo time-shift in ${\mathbb R}^d$, $d\ge 3$. The aforementioned trajectories essentially look like the traces of double-sided Brownian motions. It was introduced by A.S Sznitman in \cite{sznitman2013scaling} as a means to study scaling limits of the occupation measure of continuous time random interlacements on the lattice $N^{-1} \Z^d$.  The Brownian interlacements model can be considered to be the continuous counterpart of the random interlacements model, which is defined as a Poisson point process on the space of doubly infinite trajectories in ${\mathbb Z}^d$, $d\ge 3$, and was introduced in \cite{szn2010vacant}. Both models exhibit infinite range dependence of polynomial decay, which often complicates the application of standard arguments. Random interlacements on ${\mathbb Z}^d$ have received quite a lot of attention since their introduction. For example, percolation in the vacant set of the model have been studied in \cite{szn2009vacant3} and  \cite{szn2010vacant}. Connectivity properties of the interlacement set have been studied in \cite{rath2012connectivity}, \cite{proc2011geometry}, \cite{popov2012internal} and \cite{lacoin2013easiest}. For the Brownian interlacements model, percolative and connectivity properties were studied in \cite{li2016percolative}.

We will recall the precise definition of the Brownian interlacement model in Section~\ref{s.preliminaries}, where we will also give the precise formulation of our main results, but first we discuss our results somewhat informally. In the present work, we study \emph{visibility} inside the vacant set of the Brownian interlacements. For $\rho>0$ and $\alpha>0$, the vacant set $\V_{\alpha,\rho}$ is the complement of the random closed set $\bi^\rho_\alpha$, which is the closed $\rho$-neighbourhood of the union of the traces of the trajectories in the underlying Poisson point process in the model. Here $\alpha$ is a multiplicative constant of the intensity measure (see~\eqref{e.interlacemeas}) of the Poisson point process, governing the amount of trajectories that appear in the process. The visibility in a fixed direction in  $\V_{\alpha,\rho}$ from a given point $x\in {\mathbb R}^d$ ($d\ge 3$) is defined as the longest distance you can move from $x$ in the direction, without hitting $\bi^\rho_\alpha$. The probability that the visibility in a fixed direction from $x$ is larger than $r\ge 0$ is denoted by $f(r)=f(r,\alpha,\rho,d)$. The visibility from $x$ is then defined as the longest distance you can move in \emph{some} direction, and the probability that the visibility is larger than $r\ge 0$ is denoted by $\pvis(r)=\pvis(r,\alpha,\rho,d)$. Clearly, $\pvis(r)\ge f(r)$, but it is of interest to more closely study the relationship between the functions $\pvis(r)$ and $f(r)$. Our main result for Brownian interlacements in ${\mathbb R}^d$, Theorem \ref{t.euclidmain}, gives upper and lower bounds of $\pvis(r)$ in terms of $f(r)$. In particular, Theorem~\ref{t.euclidmain} show that the rates of decay (in $r$) for the two functions differ with at most a polynomial factor. It is worth mentioning that even if the Brownian interlacements model in some aspects behaves very differently from more standard continuum percolation models like the Poisson Boolean model, when it comes to visibility the difference does not appear to be too big. The proof of Theorem~\ref{t.euclidmain} uses first and second moment methods and is inspired by the proofs of Lemmas $3.5$ and $3.6$ of \cite{benjamini2009visibility}. The existence of long-range dependence in the model creates some extra complications to overcome. It seems to us that the arguments in the proof of Theorem~\ref{t.euclidmain} are possible to adapt to other percolation models based on Poisson-processes on infinite objects, for example the Poisson cylinder model \cite{tykesson2012percolation}.

We now move on to the Brownian excursion process in the open unit disk $\D=\{z\in {\mathbb C}\,:\,|z|<1\}$. This process is defined as a Poisson point process on the space of Brownian paths that start and end on $\partial \D$, and stay inside $\D$ in between. The intensity measure is given by $\alpha \mu$ where $\mu$ is the Brownian excursion measure (see for example~\cite{lawler2000universality}, \cite{lawler2005conformally}) and $\alpha>0$ is a constant. This process was studied in~\cite{wu2012occupation}, where, among other things, connections to Gaussian free fields were made. The union of the traces of the trajectories in this Poisson point process is a closed random set which we denoted by $\be_{\alpha}$, and the complement is denoted by $\V_{\alpha}$. Again, we consider visibility inside the vacant set. In Theorem~\ref{t.BEmain}, we show that, there is a critical level $\alpha_c=\pi/4$ such that if $\alpha <\alpha_c$, with positive probability there is some $\theta\in [0,2\pi)$ such that the line-segment $[0,\e^{i\theta})$ (which has infinite length in the hyperbolic metric) is contained in $\V_{\alpha}$, while if $\alpha\ge \alpha_c$ the set of such $\theta$ is a.s.\ empty. A similar phase transistion is known to hold for the Poisson Boolean model of continuum percolation and some other models in the hyperbolic plane, see \cite{benjamini2009visibility} and \cite{lyons1996diffusion}. As seen by Theorem~\ref{t.euclidmain}, such a phase transition does not occur for the set $\V_{\alpha,\rho}$ in the Brownian interlacements model in Euclidean space, when $\rho>0$. The proof of Theorem~\ref{t.BEmain} is based on circle covering techniques, using a sharp condition by Shepp \cite{shepp1972covering}, see Theorem~\ref{Shepp}, for when the unit circle is covered by random arcs. To be able to use Shepp's condition, the $\mu$-measure of a certain set of trajectories must be calculated. This is done in the key lemma of the section, Lemma~\ref{l.ameasure}, which we think might be of independent interest. Lemma ~\ref{l.ameasure} has a somewhat surprising consequence, see Equation~\eqref{e.shadowproc}. 

We now give some historical remarks concerning the study of visibility in various models. The problem of visibility was first studied by G.Pólya in \cite{polya1918zahlentheoretisches} where he considered the visibility for a person at the origin and discs of radius $R>0,$ placed on the lattice $\Z^2$. For the Poisson Boolean model of continuum percolation in the Euclidean plane, an explicit expression is known for the probability that the visibility is larger than $r$, see Proposition 2.1 on p.4 in \cite{calka2009asymptotics} (which uses a formula from \cite{holst1982covering}). Visibility in non-Euclidean spaces has been considered by R.Lyons in \cite{lyons1996diffusion}, where he studied the visibility on manifolds with negative curvature, see also Kahanes earlier works \cite{kahane1990coverings} \cite{kahane1991coverings} in the two-dimensional case. In the hyperbolic plane, visibility in so-called well behaved random sets was studied in \cite{benjamini2009visibility} by Benjamini et.\ al.

The rest of the paper is organized as follows. In Section~\ref{s.preliminaries} we give the definitions of Brownian interlacements and Brownian excursions, and give the precise formulations of our results. Section~\ref{s.preleuclid} contains some preliminary results needed for the proof of our main result for Brownian interlacements. In Section~\ref{s.euclidproof} we prove the main result for Brownian interlacements. The final section of the paper, Section~\ref{s.BEproof}, contains the proof of our main result for the Brownian excursion process.

We now introduce some notation. We denote by $1\{A\}$ the indicator function of a set $A$. By $A \Subset X$ we mean that $A$ is a compact subset of a topological space $X$.
Let $a \in [0,\infty]$ and $f,g$ be two functions. If $\limsup_{x\to a} f/g = 0$ we write $f = o(g(x))$ as $x \to a $, and if $\limsup_{x\to a} f/g <\infty$ we write $f = O(g(x))$ as $x\to a$.
We write $f(x) \sim g(x)$ as $x \to a$ to indicate that $\lim_{x\to a} (f(x)/g(x)) = 1$ and $f(x) \lesssim g(x)$  as $x \to a$ to indicate that $f(x) \leq g(x)(1+o(1))$ as $x\to a$. For $x\in {\mathbb R}^d$ and $r>0$, let $B(x,r)=\{y\,:\,|x-y|\le r\}$ and $B(r) = B(0,r)$. For $A \subset \R^d$ define
\[
A^t := \left\{ x \in \R^d : \dist(x,A) \leq t \right\},
\]
to be the closed $t$-neighbourhood of A. For $x,y \in \R^d$ let $[x,y]$ be the (straight) line segment between $x$ and $y$.

Finally, we describe the notation and the convention for constants used in this paper. We will let $c,c',c''$ denote positive finite constants that are allowed to depend on the dimension $d$ and the thickness $\rho$ only, and their values might change from place to place, even on the same line. With numbered constants $c_i$, $i\ge 1$, we denote constants that are defined where they first appear within a proof, and stay the same for the rest of the proof. If a constant depends on another parameter, for example the intensity of the underlying Poisson point process, this is indicated.

%\item $f(x) \gtrsim g(x), \ \Leftrightarrow g(x) \lesssim f(x)$
%\item $ f(x) \asymp g(x),\ $ if there exists $0<c<C$ such that $cg(x) \leq f(x) \leq C g(x)$. Moreover we write this as $f(x) = \Theta(g(x))$.
%\end{itemize}

\section{Preliminaries}\label{s.preliminaries}

\subsection{Brownian interlacements}
We begin with the setup as in \cite{sznitman2013scaling}. Let $C= C(\R;\R^d)$ denote the continuous functions from $\R$ to $\R^d$ and let  $C_+=C(\R_+;\R^d)$ denote the continuous functions from $\R_+$ to $\R^d$. Define
\begin{equation*}
W = \{ x \in C:  \lim_{|t| \to \infty} |x(t)| = \infty \}\mbox{ and }W_+ = \{ x \in C_+:  \lim_{t \to \infty} |x(t)| = \infty \}.
\end{equation*}
On $W$ we let $X_t$, $t\in \R$, denote the canonical process, i.e. $X_t(w) = w(t)$ for $w \in C$, and let $\W$ denote the $\sigma$-algebra generated by the canonical processes. Moreover we let $\theta_x, x \in \R$ denote the shift operators acting on $\R$, that is $\theta_x : \R \to \R, y\mapsto y+x$. We extend this notion to act on $C$ by composition as
 \[
 \theta_x : C \to C, f\mapsto f \circ \theta_x.
 \]
 Similarly, on $W_+$, we define the canonical process $X_t$, $t\ge 0$, the shifts $\theta_h$, $h\ge 0$, and the sigma algebra $\W_+$ generated by the canonical processes.
We define the following random times corresponding to the canonical processes. For $F\subset \R^d$ closed and $w\in W_+$, the {\bf entrance time} is defined as $H_F(w) = \inf\{ t \ge 0 : X_t(w) \in F\}$ and the {\bf hitting time} is defined as $\tilde{H}_F(w) = \inf\{ t> 0: X_t(w) \in F\}$. For $K \Subset \R^d$ the {\bf time of last visit} to $K$ for $w \in W_+$ is defined as $L_K(w) = \sup\left\{ t>0 : X_t(w) \in K \right\}$. The entrance time for $w\in W$ is defined similarly, but $t>0$ is replaced by $t\in \R$.
%\begin{description}
%\item[\bf{Exit time}] $T_U(w) = \inf\{ t\in \R: X_t(w) \notin U \}$ for $U\subset \R^d$ open,
%\item[\bf{Entrance time}] $H_F(w) = \inf\{ t \in \R : X_t(w) \in F\}$,
%\item[\bf{Hitting time}] If $t \in [0,\infty)$ then $\tilde{H}_F(w) = \inf\{ t> 0: X_t(w) \in F\}$, for $F\subset \R^d$ closed,
%\item[\bf{Time of last visit}] $K\subset \R^d$ compact, $L_K(w):= \sup\{t \in T : w(t) \in K\}.$
%\end{description}
On $W$, we introduce the equivalence relation $w \sim w' \Leftrightarrow \exists h \in \R : \theta_h w = w'$ and we denote the quotient space by $W^* = W/\sim$ and let 
\[
\pi :W \to W^*,\ w \mapsto w^*,
\]
denote the canonical projection. Moreover, we let $\W^*$ denote the largest $\sigma$-algebra such that $\pi$ is a measurable function, i.e. $\W^* = \{ \pi^{-1}(A) :A \in \W  \}$. We denote $W_K \subset W$ all trajectories which enter $K$, and $W^*_K$ the associated projection.
We let $P_x$ be the Wiener measure on $C$ with the canonical process starting at $x$, and we denote $ P_x^B (\cdot) = P_x(\cdot | H_B = \infty)$ the probability measure conditioned on the event that the Brownian motion never hits $B$. For a finite measure $\lambda$ on $\R^d$ we define $$P_\lambda=\int P_x \lambda(dx).$$
The transition density for the Brownian motion on $\R^d$ is given by
\begin{equation}
  p(t,x,y) := \frac{1}{(2 \pi t)^{d/2}}\exp\left(-\frac{|x-y|^2}{2t} \right)
\end{equation}
and the Greens function is given by
\[
G(x,y) = G(x-y) := \int_0^\infty p(t,x,y) \id t = c_d / |x-y|^{d-2},
\]
where $c_d$ is some dimension dependent constant, see Theorem 3.33 p.80 in \cite{morters2010brownian}.

Following \cite{sznitman2013scaling} we introduce the following potential theoretic framework. For  $K \Subset \R^d$ let $\pspace(K)$ be the space of probability measures supported on $K$ and introduce the energy functional
\begin{equation}\label{energy}
E_K(\lambda) = \int_{K\times K} G(x,y) \lambda(\id x) \lambda( \id y),\ \lambda \in \pspace(K).
\end{equation}

The Newtonian capacity of $ K \Subset \R^d$ is defined as
\begin{equation}
\capac(K):= \left( \inf_{\lambda \in \pspace(K)} \left\{ E_K(\lambda) \right\} \right)^{-1},
\end{equation}
see for instance \cite{brelot1967lectures}, \cite{portstone} or \cite{morters2010brownian}. 
It is the case that
\begin{equation}\label{e.capprop}
\mbox{ 
the capacity is a strongly sub-additive and monotone set-function.
}
\end{equation}

Let $e_K( \id y)$ be the equilibrium measure, which is the finite measure that is uniquely determined by the last exit formula, see Theorem 8.8 in \cite{morters2010brownian},
\begin{equation}
P_x( X(L_K) \in A) = \int_A G(x,y) e_K( \id y),
\end{equation}
and let $\tilde{e}_K$ be the normalized equilibrium measure. By Theorem 8.27 on p. 240 in \cite{morters2010brownian} we have that $\tilde{e}_K$ is the unique minimzer of \eqref{energy} and
\begin{equation}
\capac(K) = e_K(K).
\end{equation}
Moreover the support satisfies $\supp e_K(\id y) = \partial K$. 

If $B$ is a closed ball, we define the measure $Q_B$ on $W_B^0:=\{w\in W\,:\, H_B(w)=0\}$ as follows:
\begin{equation}\label{e.qdef}
Q_B \left[(X_{-t})_{t \geq 0} \subset A', \ X_0 \in \id y,\ (X_t)_{t \geq 0} \subset A\right] := P^B_y(A') P_y(A) e_B(\id y),
\end{equation}
where $A,A'\in {\mathcal W}_{+}$.
If $K$ is compact, then $Q_K$ is defined as 
$$Q_K=\theta_{H_K}\circ (1\{H_K<\infty\}Q_B),\mbox{ for any closed ball }B\supseteq K.$$
As pointed out  in \cite{sznitman2013scaling} this definition is independent of the choice of $B\supseteq K$ and coincides with~\eqref{e.qdef} when $K$ is a closed ball. We point out that Equation $2.21$ of \cite{sznitman2013scaling} says that

\begin{equation}\label{e.forwardtimes}
Q_K[(X_t)_{t\ge 0}\in \cdot]=P_{e_K}(\cdot).
\end{equation}

From \cite{sznitman2013scaling} we have the following theorem, which is Theorem 2.2 on p.564.
\begin{thm}
There exists a unique $\sigma$-finite measure $\nu$ on $(W^*, \W^*)$ such that for all $K$ compact,
\begin{equation}\label{e.interlacemeas}
\nu(\cdot \cap W^*_K) = \pi \circ Q_K(\cdot).
\end{equation}
\end{thm}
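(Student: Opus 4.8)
The plan is to obtain $\nu$ by exhausting $\R^d$ with compact sets and glueing the finite measures $\pi\circ Q_K$ (viewed as pushforwards under $\pi$), the whole argument resting on a single compatibility relation between $Q_K$ and $Q_{K'}$ for nested compacts.

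\textbf{Step 1 (compatibility).} Fix compact $K\subseteq K'$ and a closed ball $B\supseteq K'$. By the definition of $Q_{(\cdot)}$ and its stated independence of the choice of ball, $Q_K=\theta_{H_K}\circ(1\{H_K<\infty\}Q_B)$ and $Q_{K'}=\theta_{H_{K'}}\circ(1\{H_{K'}<\infty\}Q_B)$. Since $K\subseteq K'$ forces $H_{K'}\le H_K$ pointwise and $\{H_K<\infty\}\subseteq\{H_{K'}<\infty\}$, and since the random shifts compose (the map $w\mapsto\theta_{H_K(\theta_{H_{K'}}w)}(\theta_{H_{K'}}w)$ agrees with $w\mapsto\theta_{H_K(w)}w$ on $\{H_{K'}<\infty\}$), substituting the second identity into the first gives, after a short computation,
\begin{equation}\label{e.compatQ}
Q_K=\theta_{H_K}\circ(1\{H_K<\infty\}Q_{K'}),\qquad K\subseteq K'\text{ compact}.
\end{equation}
Next I would push \eqref{e.compatQ} forward by $\pi$. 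For $A\in\W^*$ the set $\pi^{-1}(A)$ is shift-invariant, so $\theta_{H_K}^{-1}(\pi^{-1}(A))\cap\{H_K<\infty\}=\pi^{-1}(A)\cap\{H_K<\infty\}$; together with $\pi^{-1}(W^*_K)=W_K=\{H_K<\infty\}$ (the equality $\pi^{-1}(W^*_K)=W_K$ holding because $W_K$ is shift-invariant), \eqref{e.compatQ} becomes the compatibility of the candidate measures,
\begin{equation}\label{e.compatmu}
(\pi\circ Q_{K'})(\,\cdot\,\cap W^*_K)=\pi\circ Q_K,\qquad K\subseteq K'\text{ compact}.
\end{equation}

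\textbf{Step 2 (construction).} Put $K_n=\overline{B(0,n)}$. Every $w\in W$ has $w(0)\in K_n$ for all large $n$, so $W=\bigcup_n W_{K_n}$ and $W^*=\bigcup_n W^*_{K_n}$ with $W^*_{K_n}\uparrow W^*$. For $A\in\W^*$ define $\nu(A):=\lim_{n\to\infty}(\pi\circ Q_{K_n})(A)$. By \eqref{e.compatmu}, $(\pi\circ Q_{K_n})(A)=(\pi\circ Q_{K_{n+1}})(A\cap W^*_{K_n})\le(\pi\circ Q_{K_{n+1}})(A)$, so the limit exists in $[0,\infty]$ and $\nu$ is countably additive as an increasing limit of measures (monotone convergence). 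For a given compact $K$, choosing $n$ with $K\subseteq K_n$ and using \eqref{e.compatmu} (which makes $(\pi\circ Q_{K_n})(A\cap W^*_K)=(\pi\circ Q_K)(A)$ independent of such $n$) yields the required identity $\nu(\,\cdot\,\cap W^*_K)=\pi\circ Q_K$. In particular $\nu(W^*_K)=Q_K(W)=\capac(K)<\infty$ by \eqref{e.forwardtimes} (together with $\capac(K)=e_K(K)$), so $\nu$ is $\sigma$-finite.

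\textbf{Step 3 (uniqueness) and the main difficulty.} If $\nu'$ is another $\sigma$-finite measure with the stated property, then $\nu'(A\cap W^*_{K_n})=(\pi\circ Q_{K_n})(A)=\nu(A\cap W^*_{K_n})$ for all $n$ and all $A\in\W^*$; since $A\cap W^*_{K_n}\uparrow A$, continuity from below forces $\nu'=\nu$. The conceptual crux is \eqref{e.compatmu}: one must be certain that restricting $\pi\circ Q_{K'}$ to trajectories meeting $K$ and re-rooting them at the entrance time to $K$ reproduces $\pi\circ Q_K$ exactly, with no over- or under-counting and no distortion of the last-exit/forward decomposition underlying \eqref{e.qdef}. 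As indicated this reduces entirely to the shift bookkeeping giving \eqref{e.compatQ}, together with the two facts already granted — the ball-independence of $Q_K$ and the forward identity \eqref{e.forwardtimes} — after which only routine measure-theoretic glueing remains, modulo the standard measurability checks ($H_K$ and $\theta_{H_K}$ measurable, $W_K\in\W$ and shift-invariant so that $W^*_K\in\W^*$, and $\pi$ measurable by the very definition of $\W^*$).
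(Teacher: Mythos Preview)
The paper does not give its own proof of this theorem; it is quoted verbatim as Theorem~2.2 of \cite{sznitman2013scaling}, so there is nothing in the paper to compare against. Your argument is the standard one (and is essentially Sznitman's): the crux is the compatibility relation \eqref{e.compatQ}, which follows exactly from the shift bookkeeping you indicate (using $H_K(\theta_s w)=H_K(w)-s$ and $\{H_K<\infty\}\subseteq\{H_{K'}<\infty\}$), and once \eqref{e.compatmu} is in hand the exhaustion construction and uniqueness are routine. The proof is correct as written; the only places that warrant a sentence of extra care are the identification $W_K=\{H_K\in\R\}$ (which uses that every $w\in W$ escapes to infinity in both time directions, so the infimum is never $-\infty$) and the measurability checks you already flag.
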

By Equations $2.7$ on p.564 and $2.21$ on p.568 in \cite{sznitman2013scaling} it follows that for $K\subset {\mathbb R}^d$ compact
$$\nu(W_K^*)={\rm cap}(K).$$
Now we introduce the space of point measures or configurations, where $\delta$ is the usual Dirac measure:
\small\begin{equation}
\Om = \left\{\omega = \sum_{i \geq 0 } \delta(w^*_i, \alpha_i) : (w^*_i,\alpha_i) \in W^*\times[0,\infty),\ \omega(W^*_K\times [0,\alpha])<\infty, \forall K \Subset \R^d, \alpha \geq 0 \right\},
\end{equation}\normalsize
and we endow $\Om$ with the $\sigma$-algebra $\M$ generated by the evaluation maps \[\omega \mapsto \omega(B), B\in \W^*\otimes \B(\R_+).\]
Furthermore, we let $\Pm$ denote the law of the Poisson point process of $W^*\times  \R_+$ with intensity measure $\nu \otimes \id \alpha$.
The Brownian interlacement is then defined as the random closed set 
\begin{equation}
\bi^\rho_\alpha (\omega) := \bigcup_{\alpha_i \le \alpha } \bigcup_{s \in \R} B(w_i(s),\rho), 
\end{equation}

where $\omega = \sum_{i \geq 0 } \delta(w^*_i, \alpha_i) \in \Omega$ and $\pi(w_i)=w_i^*.$
We then let $\V_{\alpha,\rho} = \R^d\setminus \bi^\rho_\alpha$ denote the vacant set.

The law of $\bi^\rho_\alpha$ is characterized as follows. Let $\Sigma$ denote the family of all closed sets of $\R^d$ and let $\mathbb{F}:= \sigma\left(F \in \Sigma: F\cap K = \emptyset, K\ \text{compact} \right)$. The law of the interlacement set, $Q_\alpha^\rho$, is a probability measure on $(\Sigma,\mathbb{F})$ given by the following identity:
\begin{equation}
Q_\alpha^\rho \left(\{F \in \Sigma: F \cap K = \emptyset\}\right)=\Pm\left(\bi^\rho_\alpha \cap K = \emptyset\right) = \e^{-\alpha \capac(K^\rho )}.
\end{equation}
For convenience, we also introduce the following notation. For $\alpha>0$ and $\omega=\sum_{i\ge 1}\delta_{(w_i,\alpha_i)}\in \Omega$, we write  

\begin{equation}
\omega_{\alpha}:=\sum_{i\ge 1}\delta_{(w_i,\alpha_i)}1\{\alpha_i\le \alpha\}.
\end{equation}

Observe that under ${\mathbb P}$, $\omega_{\alpha}$ is a Poisson point process on $W^*$ with intensity measure $\alpha \nu$. Note that, by Remark 2.3 (2) and Proposition 2.4 in \cite{sznitman2013scaling} both $\nu$ and $\Pm$ are invariant under translations as well as linear isometries.

\remark
To get a better intuition of how this model works it might be good to think of the local structure of the random set $\bi^{\rho}_{\alpha}$. This can be done in the following way, which uses \eqref{e.forwardtimes}. Let $K\subset \R^d$ be a compact set. Let $N_K\sim\text{Poisson}(\alpha \capac (K))$. Conditioned on $N_K$, let $(y_i)_{i=1}^{N_K}$ be i.i.d.\ with distribution $\tilde{e}_K$.  Conditioned on $N_K$ and $(y_i)_{i=1}^{N_K}$ let $((B_i(t))_{t\ge 0})_{i=1}^{N_K}$ be a collection of independent Brownian motions  in ${\mathbb R}^d$ with $B_i(0)=y_i$ for $i=1,...,N_K$. We have the following distributional equality:
\begin{equation}\label{e.local}
K \cap \bi^\rho_\alpha \eqdist \left(\bigcup_{i=1}^{N_{K}}  [B_i]^\rho  \right) \cap K,
\end{equation}
where $[B_i]$ stands for the trace of $B_i$.
%To clarify, let $K = B(0,1)$, which implies $e_K = \sigma_d$, where $\sigma_d$ is the uniform measure on the sphere, and $N_{e_K} \sim \text{Poisson}(\alpha \omega_d (1+\rho)^{d-2})$.
%
%Thus, in the case of $K$ being a sphere the Brownian interlacements has the same distribution as the union of $N_K$ Brownian motions started uniformly at random on the sphere. 

\subsection{Results for the Brownian interlacements model in Euclidean space}
The following theorem is our main result concerning visibility inside the vacant set of Brownian interlacements in ${\mathbb R}^d$.

\begin{thm}\label{t.euclidmain}
There exist constants $0<c<c'<\infty$ depending only on $d$, $\rho$ and $\alpha$ such that
\begin{align}
\pvis(r)& \lesssim  c'\, r^{2(d-1)} f(r),\ d\ge 3, \label{e.bounds4} \\
  \pvis (r)&\gtrsim c\,r^{d-1} f(r) ,\ d\ge 4, \label{e.bounds5}
\end{align}
as $r \to \infty$.
\end{thm}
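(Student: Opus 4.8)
The plan is to deduce both bounds from the fact that the event "visibility larger than $r$" can be covered by, and in turn essentially contains, a controlled family of "visibility in a fixed direction" events. Fix the starting point $x = 0$. The visibility from $0$ exceeds $r$ iff there is \emph{some} direction $\theta\in S^{d-1}$ such that the segment $[0, r\theta]$ lies in $\V_{\alpha,\rho}$. To obtain the upper bound \eqref{e.bounds4}, I would discretize the sphere: choose a maximal $\epsilon$-separated set $\{\theta_1,\dots,\theta_N\}\subset S^{d-1}$ with $N \asymp (r/\rho)^{d-1}$ (i.e.\ $\epsilon \asymp \rho/r$), chosen fine enough that if \emph{any} direction is visible to distance $r$, then one of the $\theta_j$ is visible to distance, say, $r/2$ — this uses that two rays from the origin of length $r$ making an angle $\epsilon$ stay within distance $\lesssim \rho$ of each other near their far ends, so a $\rho$-tube around one blocks the other only on a bounded-length portion; some care near the origin is needed but a fixed-radius ball there contributes only a constant factor. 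A union bound then gives $\pvis(r) \le \sum_j \Pm(\theta_j \text{ visible to } r/2) = N f(r/2) \lesssim c'\,(r/\rho)^{d-1} f(r/2)$. To convert $f(r/2)$ into $f(r)$ at the cost of another polynomial factor, I would use a quasi-multiplicativity / FKG-type estimate: $f(r/2) \le c'' r^{d-1} f(r)$, which should follow by the same covering idea applied along a single ray (splitting $[0,r]$ into $O(1)$ blocks and comparing, as in Lemmas 3.5–3.6 of \cite{benjamini2009visibility}) together with the exponential formula $\Pm(\bi^\rho_\alpha\cap K=\emptyset)=\e^{-\alpha\capac(K^\rho)}$ and subadditivity \eqref{e.capprop} of capacity. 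Combining the two polynomial losses yields the exponent $2(d-1)$.

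For the lower bound \eqref{e.bounds5}, the natural route is a second-moment (Paley–Zygmund) argument on the random variable $Z = \int_{S^{d-1}} 1\{[0,r\theta]\subset \V_{\alpha,\rho}\}\,d\theta$, so that $\{\pvis(r) > r\}\supseteq\{Z>0\}$ and $\pvis(r)\ge \Pm(Z>0) \ge (\E Z)^2/\E[Z^2]$. Here $\E Z = \int f(r)\,d\theta \asymp f(r)$ (using rotation invariance of $\nu$ and $\Pm$, noted after \eqref{e.forwardtimes}). The main work is bounding
\[
\E[Z^2] = \int_{S^{d-1}}\int_{S^{d-1}} \Pm\big([0,r\theta]\cup[0,r\theta']\subset \V_{\alpha,\rho}\big)\,d\theta\,d\theta'.
\]
Writing $K(\theta,\theta') = [0,r\theta]\cup[0,r\theta']$, the probability equals $\e^{-\alpha\capac(K(\theta,\theta')^\rho)}$, so by subadditivity and monotonicity of capacity, $\capac(K(\theta,\theta')^\rho) \ge \capac([0,r\theta]^\rho)$, which already gives $\Pm \le f(r)$; the issue is that this crude bound only yields $\E[Z^2]\lesssim f(r)$, hence $\pvis(r)\gtrsim f(r)^2$, not the claimed $r^{d-1}f(r)$. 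To get the extra power of $r$ one must show that when $\theta$ and $\theta'$ differ by angle $\gtrsim 1/r$ the two tubes are "almost disjoint" in the capacity sense, i.e.\ $\capac(K(\theta,\theta')^\rho) \ge (1-o(1))\big(\capac([0,r\theta]^\rho) + \capac([0,r\theta']^\rho)\big)$ up to a bounded additive correction coming from the common neighborhood near the origin; this makes $\Pm\lesssim c\, f(r)^2$ for the bulk of pairs, while the exceptional set of pairs with angle $\lesssim 1/r$ has measure $\asymp r^{-(d-1)}$ and contributes $\lesssim r^{-(d-1)} f(r)$. Balancing, $\E[Z^2] \lesssim f(r)^2 + r^{-(d-1)}f(r)\lesssim r^{-(d-1)} f(r)$ (the second term dominates since $f(r)\to 0$), whence $\pvis(r)\gtrsim f(r)^2 / (r^{-(d-1)}f(r)) = r^{d-1} f(r)$.

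The main obstacle is this capacity near-additivity estimate for two $\rho$-tubes emanating from a common point: one needs a lower bound on $\capac\big(([0,r\theta]\cup[0,r\theta'])^\rho\big)$ that captures the full contribution of both tubes rather than just one. I would attack it via the variational (energy) characterization \eqref{energy}: take the equilibrium measure $e_{[0,r\theta]^\rho}$ plus $e_{[0,r\theta']^\rho}$, normalize, and bound the cross-energy $\int\!\int G(u,v)\,e_{[0,r\theta]^\rho}(du)\,e_{[0,r\theta']^\rho}(dv)$ using $G(u,v) = c_d|u-v|^{-(d-2)}$ and the fact that the two tubes separate linearly away from a neighborhood of $0$ of fixed size; the contribution of that neighborhood is $O(1)$ in capacity, which is exactly the additive error we can afford. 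This is where the restriction $d\ge 4$ in \eqref{e.bounds5} should enter — the cross term $\int|u-v|^{-(d-2)}$ integrated over two rays diverging at the origin is summable/small only when $d-2 > 1$; in $d=3$ the logarithmic interaction between the tubes is too strong for this argument to give the claimed rate. The remaining steps — the spherical covering combinatorics, the single-ray quasi-multiplicativity, and assembling the Paley–Zygmund bound — are routine given the exponential formula and the capacity estimates, and closely parallel \cite{benjamini2009visibility}.
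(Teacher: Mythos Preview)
Your lower-bound strategy (second moment on $Z=\int_{S^{d-1}}1\{[0,r\theta]\subset\V\}\,d\theta$) matches the paper's exactly, and your diagnosis of where $d\ge 4$ enters is correct. The paper executes the correlation bound slightly differently: rather than bounding the cross-energy between the two equilibrium measures, it splits $[0,rx']$ into a short initial piece of length $g(\theta)\asymp 1/\sin\theta$ (the part still within a fixed distance of the other ray) and a long tail, drops the short piece, and then uses independence of the Poisson counts $\omega_\alpha(W^*_{[0,rx]^\rho})$ and $\omega_\alpha(W^*_{\text{tail}^\rho}\setminus W^*_{[0,rx]^\rho})$. The key input (Lemma~\ref{est_meas}) is $\nu(W^*_{L_r^\rho}\cap W^*_{L^\rho})\lesssim r\,\dist(L_r,L)^{-(d-3)}$, proved by covering both tubes with unit balls and summing pairwise Green's-function bounds. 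Your cross-energy route is morally equivalent and should also go through, though the paper's decomposition avoids having to analyse the equilibrium measure of a tube.

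Your upper-bound argument, however, has a real gap. The claimed quasi-multiplicativity $f(r/2)\le c''r^{d-1}f(r)$ is false: since $f(r)=\exp(-\alpha\,\capac([0,rx]^\rho))$ with $\capac([0,rx]^\rho)\asymp r$ for $d\ge 4$ (and $\asymp r/\log r$ for $d=3$), the ratio $f(r/2)/f(r)=\exp\big(\alpha(\capac([0,r]^\rho)-\capac([0,r/2]^\rho))\big)$ grows like $e^{cr}$, not polynomially. The preceding reduction is also not valid as stated: visibility along $[0,r\theta]$ does \emph{not} imply visibility along a nearby $[0,(r/2)\theta_j]$, because $\V$ is the complement of a $\rho$-thickening and being close to a vacant point does not make you vacant.

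The paper's fix is to insert an intermediate event. If some direction is visible to distance $r$, then for the nearest net point $x_j$ the $\epsilon$-tube $[0,x_j]^\epsilon$ contains a vacant crossing; this crossing forces $\omega_\alpha(W^*_{[0,x_j]^{\rho-\epsilon}})=0$, since any trajectory coming within $\rho-\epsilon$ of the axis would place a full $\epsilon$-ball of $\bi$ across the tube. One then compares $\capac([0,rx]^{\rho-\epsilon})$ with $\capac([0,rx]^{\rho})$ and chooses $\epsilon=1/r$, so that the difference is $O(1)$ (shown via~\eqref{e.local} and a hitting-probability estimate for a Brownian motion started on $\partial K_1$); this costs only a constant factor in probability, not a polynomial. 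The $\epsilon$-net on $\partial B(r)$ then has $N(\epsilon,r)=O((r/\epsilon)^{d-1})=O(r^{2(d-1)})$ points, and that single covering number is the entire source of the exponent $2(d-1)$ --- there is no splitting into two separate polynomial losses.
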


We believe that the lower bound in~\eqref{e.bounds5} is closer to the true asymptotic behaviour of $\pvis(r)$ as $r\to \infty$ than the upper bound in~\eqref{e.bounds4}. Indeed, if for $r>0$ we let $Z_r$ denote the set of points $x\in\partial B(0,r)$ such that $[0,x]\subset \V_{\alpha,\rho}$, then the expected value of $|Z_r|$ is proportional to $r^{d-1} f(r)$.  We also observe that a consequence of Theorem~\ref{t.euclidmain} we obtain that $\pvis (r)\to 0$ as $r\to \infty$. However, this fact can be obtained in simpler ways than Theorem~\ref{t.euclidmain}.

\subsection{Brownian excursions in the unit disc}
The Brownian excursion measure on a domain $S$ in $\C$ is a $\sigma$-finite measure on Brownian paths which is supported on the set of continuous paths, $w =(w(t))_{0 \leq t \leq T_w}$, that start and end on the boundary $\partial S$ such that $w(t) \in S, \forall t \in (0,T_w)$. Its definition is found in for example \cite{lawler2000universality}, \cite{virag2003beads}, see also \cite{lawler2005conformally}, \cite{lawler2004soup} for useful reviews. We now recall the definition and properties of the Brownian excursion measure in the case when $S$ is the open unit disc ${\mathbb D}=\{z\in {\mathbb C}\,:\,|z|<1\}$.

 Let 
\[
W_\D := \left\{ w \in C([0,T_w],\bar{\D}) : w(0),w(T_w) \in \partial \D,\ w(t) \in \D, \forall t \in (0,T_w)  \right\}
\]
and let $X_t(w) = w(t)$ be the canonical process on $W_\D$. Let $\W_\D$ be the sigma-algebra generated by the canonical processes. Moreover, for $K \subset \D$ we let $W_{K,\D}$ be the set of trajectories in $W_\D$ that hit $K$. Let 
\small\begin{equation}
\Om_\D = \left\{\omega = \sum_{i \geq 0 } \delta(w_i, \alpha_i) : (w_i,\alpha_i) \in W_\D\times[0,\infty),\ \omega(W_{K,\D}\times [0,\alpha])<\infty, \forall K \Subset \D, \alpha \geq0 \right\}.
\end{equation}\normalsize
We endow $\Om_\D$ with the $\sigma$-algebra $\M_\D$ generated by the evaluation maps \[\omega \mapsto \omega(B), B\in \W_{\D}\otimes \B(\R_+).\]

For a probability measure $\sigma$ on $\D$, denote by $P_{\sigma}$ the law of Brownian motion with starting point chosen at random according to $\sigma$, stopped upon hitting $\partial \D$. (Note that $P_\sigma$ has a different meaning if it occurs in a section concerning Brownian interlacements.) For $r>0$, let $\sigma_r$ be the uniform probability measure on $\partial B(0,r)\subset {\mathbb R}^2$. The Brownian excursion measure on $\D$ is defined as the limit
\begin{equation}
\mu  = \lim_{\epsilon \to 0} \frac{2 \pi}{\epsilon} P_{\sigma_{1-\epsilon}}. 
\end{equation} 
See for example Chapter $5$ in \cite{lawler2005conformally}  for details. The measure $\mu$ is a sigma-finite measure on $W_\D$ with infinite mass.

As in~\cite{wu2012occupation} we can then define the Brownian excursion process as a Poisson point process on $W_\D\times  \R_+$ with intensity measure $\mu \otimes \id \alpha$ and we let $\Pm_\D$ denote the probability measure corresponding to this process.

For $\alpha>0$, the Brownian excursion set at level $\alpha$ is then defined as
\begin{equation}
\be_\alpha (\omega) := \bigcup_{\alpha_i \le \alpha } \bigcup_{s \ge 0} w_i(s),\ \omega = \sum_{i \geq 0 } \delta(w_i, \alpha_i)\in \Omega_{\D},
\end{equation}
and we let $\V_\alpha = \D\setminus \be_\alpha$ denote the vacant set. 

Proposition 5.8 in \cite{lawler2005conformally} says that $\mu$, and consequently ${\mathbb P}_{\D}$, are invariant under conformal automorphisms of $\D$. The conformal automorphisms of $\D$ are given by 
\begin{equation}
T_{\lambda, a} = \lambda \frac{z-a}{\bar{a} z - 1},\ |\lambda|=1,\ |a|<1.
\end{equation}
On  $\D$ we consider the hyperbolic metric $\rho$ given by

$$\rho(u,v)=2\tanh^{-1}\left| \frac{u-v}{1-\bar{u}v} \right|\mbox{ for }u,v\in \D.$$

We refer to $\D$ equipped with $\rho$ as the Poincar\'e disc model of $2$-dimensional hyperbolic space ${\mathbb H}^2$. The metric $\rho$ is invariant under $(T_{\lambda, a})_{|\lambda|=1,\ |a|<1}.$

%For $\theta\in [0,2\pi)$, we denote by $L_{\infty}(\theta)$ the line segment $[0,\e^{i\theta})$. 

The Brownian excursion process can in some sense be thought of as the $\h$ analogue of the Brownian interlacements process due to the following reasons.  As already mentioned that the law of the Brownian excursion process is invariant under the conformal automorphisms of $\D$, which are isometries of $\h$. Moreover, Brownian motion in $\h$ started at $x\in \D$ can be seen as a time-changed Brownian motion started at $x$ stopped upon hitting $\partial \D$, see Example 3.3.3 on p.84 in \cite{hsu2002stochastic}. In addition, we can easily calculate the $\mu$-measure of trajectories that hit a ball as follows. First observe that for $r<1$ 
\begin{align}\label{e.meascapac}
\mu(\{\gamma\,:\,\gamma\cap B(0,r)\neq \emptyset\})& =\lim_{\epsilon\to 0}2 \pi \epsilon^{-1} P_{\sigma_{1-\epsilon}}(H_{B(0,r)}<\infty)\nonumber\\ & =\lim_{\epsilon\to 0}\frac{2 \pi \log(1-\epsilon)}{\epsilon \log(r)}=-\frac{2 \pi}{\log(r)},
\end{align}
where we used Theorem 3.18 of~\cite{morters2010brownian} in the penultimate equality. For $r_h\ge 0$ let $B_{{\mathbb H}^2}(x,r_h)=\{y\in \D\,:\,\rho(x,y)\le r_h\}$ be the closed hyperbolic ball centered at $x$ with hyperbolic radius $r_h$. Then $B_{{\mathbb H}^2}(0,r_h)=B(0,(e^{r_h}-1)/(e^{r_h}+1))$ so that 
\begin{align*}
\mu(\{\gamma\,:\,\gamma\cap B_{{\mathbb H}^2}(0,r_h)\neq \emptyset\})=-\frac{2 \pi}{\log(\frac{e^{r_h}-1}{e^{r_h}+1})}=\frac{2 \pi}{\log(\coth(r_h/2))}.
\end{align*}
The last expression can be recognized as the hyperbolic capacity (see \cite{grigor1999analytic} for definition) of a hyperbolic ball of radius $r_h$, since according to Equation 4.23 in  \cite{grigor1999analytic}

\begin{equation}\label{ball_capac}
\capac_{\h}(B_{\h}(0,r_h))=\left( \int_{r_h}^{\infty} \frac{1}{S(t)} \id t\right)^{-1},
\end{equation}
where $S(r_h)=2\pi \sinh(r_h)$ is the circumference of a ball of radius $r_h$ in the hyperbolic metric. The integral equals
\[
\int_{r_h}^{\infty} \frac{1}{2\pi \sinh(t)} \id t = \frac{1}{2\pi }\left[ \log(\tanh(t/2)) \right]_{r_h}^\infty =\frac{\log(\coth(r_h/2))}{2\pi },
\]
which yields the expression
\[
\capac_{\h}(B_{\h}(0,r_h)) = \frac{2 \pi}{ \log[\coth(r_h/2)]},
\]
which coincides with~\eqref{e.meascapac}.

We now define the event of interest in this section. Let

\begin{equation}
V_\infty ^{\alpha}= \left\{  \{\theta\in [0,2\pi)\,:\, [0,\e^{i\theta})\subset \V_\alpha\}\neq \emptyset \right\}.
\end{equation}

If $V_{\infty}^{\alpha}$ occurs, we say that we have visibility to infinity in the vacant set (since $[0,\e^{i\theta})$ has infinite length in the hyperbolic metric). As remarked above, such a phenomena cannot occur for the Brownian interlacements model on ${\mathbb R}^d$ $(d\ge 3)$.

\subsection{Results for the Brownian excursions process}
Our main result (Theorem~\ref{t.BEmain}) for the Brownian excursion process is that we have a phase transition for visibility to infinity in the vacant set. We also determine the critical level for this transition and what happens at the critical level. 
\begin{thm}\label{t.BEmain}
It is the case that
\begin{equation}
\left. 
\begin{matrix}
\Pm_\D(V_\infty^{\alpha}) > 0, & \alpha < \pi /4, \\
\Pm_\D(V_\infty^{\alpha}) =0, & \alpha\geq \pi /4.
\end{matrix}
\right.
\end{equation}
\end{thm}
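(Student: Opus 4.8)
The plan is to reduce visibility-to-infinity to a circle-covering problem and then invoke Shepp's criterion. Fix a direction $\theta$ and consider the geodesic ray $[0,\e^{i\theta})$. An excursion $\gamma$ blocks this ray precisely when $\gamma$ intersects it; since each excursion $\gamma$ together with the $0$-neighbourhood (the trace itself) is a closed connected set with both endpoints on $\partial\D$, the set of directions $\theta$ for which $\gamma$ blocks $[0,\e^{i\theta})$ is a closed arc (or finite union of arcs) of $\partial\D$ — more precisely, the ``shadow'' of $\gamma$ as seen from $0$. So $V_\infty^\alpha$ fails iff the union of the shadows of all excursions in $\omega_\alpha$ covers $\partial\D$. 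The Brownian excursion process, restricted to excursions whose shadow has a given angular size, is a Poisson process, so the shadow arcs form a Poisson process of random arcs on the circle; by rotational invariance of $\mu$ the centres are uniform given the lengths. This is exactly the setting of Shepp's theorem (Theorem~\ref{Shepp}): the circle is covered a.s.\ iff $\sum_n \frac{1}{n^2}\exp(\ell_1+\dots+\ell_n)=\infty$, where $\ell_n$ is the (ordered) expected number of arcs of length exceeding a threshold, suitably normalised; in intensity-measure terms the relevant quantity is $\ell(x)$, the expected measure of arcs of length at least $x$, and the criterion becomes an integral condition on $\ell$.

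The key input is therefore to compute, for the intensity measure $\alpha\mu$, the ``length distribution'' of the shadow arcs — i.e.\ for $a\in(0,2\pi)$ the $\mu$-measure of the set of excursions whose shadow from $0$ has angular length at least $a$ (or exactly the tail/density needed to feed into Shepp). This is precisely the content of the announced key lemma, Lemma~\ref{l.ameasure}, which I would prove by a conformal-invariance argument: map $\D$ by a Möbius transformation sending $0$ to a suitable point, or better, use the explicit form of $\mu$ as a limit $\frac{2\pi}{\epsilon}P_{\sigma_{1-\epsilon}}$ together with the known harmonic-measure / hitting formulas for Brownian motion in the disc. Concretely, the shadow of $\gamma$ from $0$ covers an arc $I$ iff $\gamma$ disconnects $0$ from the complementary arc $\partial\D\setminus I$, i.e.\ $\gamma$ together with the two boundary arcs it cuts off separates $0$; computing the $\mu$-measure of excursions that separate $0$ from a fixed boundary arc of length $2\pi - a$ reduces, via conformal mapping to a standard configuration (e.g.\ the upper half-plane with the separated arc becoming a half-line), to an excursion-measure computation analogous to~\eqref{e.meascapac}. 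The outcome should be a clean closed-form expression for $\mu(\text{shadow}\supseteq I)$ as a function of the arc length, with leading behaviour as the arc shrinks that produces the critical constant $\pi/4$ when plugged into Shepp's series.

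With Lemma~\ref{l.ameasure} in hand the proof of Theorem~\ref{t.BEmain} is: (i) translate $V_\infty^\alpha$ to non-covering of $\partial\D$ by the Poisson family of shadow arcs, noting one must be slightly careful that ``$[0,\e^{i\theta})\subset\V_\alpha$'' corresponds to $\theta$ lying outside \emph{every} shadow, and that the shadows are genuinely arcs (connectedness of the trace plus the fact that an excursion hits $\partial\D$ ensures the complement-of-shadow is open and the shadow closed); (ii) apply Shepp's criterion \ref{Shepp} with the explicit length distribution, and check that the resulting series converges for $\alpha<\pi/4$ (giving $\Pm_\D$-positive probability of an uncovered point, hence $\Pm_\D(V_\infty^\alpha)>0$ — with a zero-one or rotational-ergodicity remark if one wants to upgrade ``positive'' appropriately, though the statement only claims positivity) and diverges for $\alpha\ge\pi/4$ (giving a.s.\ covering, hence $\Pm_\D(V_\infty^\alpha)=0$); (iii) handle the boundary case $\alpha=\pi/4$ by checking that the Shepp series still diverges there — this is where the ``no visibility at criticality'' conclusion comes from, and it hinges on the precise constant and the precise second-order term in the length distribution, so the computation in Lemma~\ref{l.ameasure} must be accurate beyond leading order.

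The main obstacle I anticipate is Lemma~\ref{l.ameasure}: getting the exact $\mu$-measure of the shadow-covering event, including the sub-leading term, rather than just an asymptotic bound. A naive conformal mapping changes the observation point $0$ and distorts arcs, so one needs to either fix the conformal gauge carefully or compute directly with the limit definition of $\mu$ and the disc hitting densities; keeping track of the exact constants through this is delicate, and it is exactly this precision that distinguishes $\alpha<\pi/4$, $\alpha=\pi/4$, and $\alpha>\pi/4$. A secondary technical point is verifying the measurability/geometry claim that shadows are arcs and that the event $V_\infty^\alpha$ coincides \emph{exactly} (not just up to null sets) with the non-covering event, so that Shepp's theorem applies verbatim.
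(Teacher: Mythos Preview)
Your proposal is essentially the paper's approach: reduce $V_\infty^\alpha$ to non-covering of $\partial\D$ by the Poisson family of shadow arcs, compute the $\mu$-measure of excursions with shadow length at least $\theta$ (Lemma~\ref{l.ameasure} gives the exact formula $\mu(A_\theta)=8/\theta$, obtained via the log map and the range of one-dimensional Brownian motion rather than a direct harmonic-measure computation), and feed this into Shepp's criterion.

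Two technical points you should be aware of. First, an excursion can wind around the origin, so its shadow may be all of $\partial\D$; the paper discards these full-length shadows separately (there are only Poisson-many, with finite mean, so this does not affect the covering question up to an event of positive probability). Second, Shepp's theorem as stated has deterministic lengths $(l_n)$, whereas here the ordered shadow lengths $\Theta_{(n),\alpha}$ are random; one conditions on them and must verify the Shepp series converges or diverges almost surely. The exact formula $\mu(A_\theta)=8/\theta$ makes the process of \emph{inverse} shadow lengths a homogeneous Poisson process of rate $8\alpha$, so $\Theta_{(n),\alpha}=8\alpha/n+O(n^{-5/4})$ a.s., and the Shepp series becomes (up to a finite multiplicative constant) $\sum n^{-2}\exp\big((4\alpha/\pi)\log n\big)$. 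Thus your concern about needing sub-leading accuracy in Lemma~\ref{l.ameasure} is misplaced: the critical case $\alpha=\pi/4$ is settled by the exact leading constant alone, via the borderline case $l_n=c/n$ of Shepp.
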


\remark A similar phase-transition for visibility to infinity was proven to hold for so called well-behaved random sets in the hyperbolic plane in \cite{benjamini2009visibility}. One example of a well-behaved random set is the vacant set of the Poisson-Boolean model of continuum percolation with balls of deterministic radii. In this model, balls of some fixed radius are centered around the points of a homogeneous Poisson point process in $\h$, and the vacant set is the complement of the union of those balls. In this case, a phase-transition for visibility was known to hold earlier, see \cite{lyons1996diffusion}.

\remark It is easy to see that 

\begin{equation}
{\mathbb P}_{{\mathbb D}}([0,\e^{i\theta})\subset \V_\alpha)=0\mbox{ for every }\theta\in [0,2\pi)\mbox{ and every }\alpha>0.
\end{equation}

Hence, the set $\{\theta\in [0,2\pi)\,:\, [0,\e^{i\theta})\subset \V_\alpha\}$ has Lebesgue measure $0$ a.s.\ when $\alpha>0$. It could be of interest to determine the Hausdorff dimension of $\{\theta\in [0,2\pi)\,:\, [0,\e^{i\theta})\subset \V_\alpha\}$ on the event that this set is non-empty. This was for example done for well-behaved random sets in the hyperbolic plane in \cite{thaele2014hausdorff}.
\section{Preliminary results for the Euclidean case}\label{s.preleuclid}
%Now we turn to estimating the probability of visibility, that is the probability of the existence of a straight line, from $0$ to $\partial B(r),\ r >0$. 
In this section we collect some preliminary results needed for the proof of Theorem~\ref{t.euclidmain}.
The parameters $\alpha>0$ and $\rho>0$ will be kept fixed, so for brevity we write $\V$ and $\bi$ for $\V_{\alpha,\rho}$ and $\bi^{\rho}_\alpha$ respectively. We now introduce some additional notation.
For $A,B \Subset \R^d$ define the event
\begin{equation}\label{visAB}
A \geo B := \{ \exists\, x\in A,\,y\in B\,:\,[x,y]\subset \V \}.
\end{equation}

Then
\begin{align}
& \pvis (r) = \Pm \left(0 \geo \partial B(r)\ \right), \\
&f(r) = \Pm \left(  0 \geo xr \right), x \in S^{d-1},
\end{align}
where $S^{d-1}= \partial B(1)$. For $L,\rho>0$ let 
\begin{equation}
[0,L]_{\rho} := \left\{ x = (x_1,x') \in \R^d : x_1 \in[0,L],\ | x'| \leq \rho \right\}.
\end{equation}
For $x,y\in \R^d$ let $[x,y]_{\rho}=R_{x,y}([0,|x-y|]_{\rho})$ where $R_{x,y}$ is an isometry on $\R^d$ mapping $0$ to $x$ and $(|x-y|,0,...,0)$ to $y$. In other words, $[x,y]_{\rho}$ is the finite cylinder with base radius $\rho$ and with central axis running between $x$ and $y$. Using estimates of the capacity of $[0,L]_{1}$ from \cite{portstone} we easily obtain estimates of the capacity of $[0,L]_{\rho}$ for general $\rho$ as follows.
\begin{lem}\label{l.capacest}
For every $L_0 \in (0,\infty)$ and $\rho_0 \in (0,\infty)$ there are constants $c,c' \in (0,\infty)$ (depending on $L_0,\rho_0$ and $d$) such that for $ L\geq L_0,\ \rho \leq \rho_0$,
\begin{align*}
& c \rho^{d-3} L \leq \capac( [0,L]_{\rho}) \leq c' \rho^{d-3} L,\ d>3,\ \\ 
& c L / ( \log( L/ \rho) )\leq \capac( [0,L]_{\rho}) \leq c' L /( \log( L/ \rho) ),\ d=3.
\end{align*}
\end{lem}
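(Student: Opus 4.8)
The plan is to reduce everything to the case $\rho=1$ via the scaling property of Newtonian capacity, and then simply quote the one‑parameter estimates for $\capac([0,L]_1)$ from \cite{portstone}.

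The basic tool is the scaling identity $\capac(\lambda K)=\lambda^{d-2}\capac(K)$, valid for every $K\Subset\R^d$ and every $\lambda>0$. This follows immediately from the homogeneity $G(\lambda x,\lambda y)=\lambda^{-(d-2)}G(x,y)$ of the Green function: if $\lambda_*\in\pspace(K)$ and $\nu_*$ denotes its push‑forward under $x\mapsto\lambda x$, then $\nu_*\in\pspace(\lambda K)$ and, by \eqref{energy}, $E_{\lambda K}(\nu_*)=\lambda^{-(d-2)}E_K(\lambda_*)$; taking infima over probability measures and recalling the definition of $\capac$ yields the identity. Combining this with the elementary observation that $[0,L]_\rho=\rho\,[0,L/\rho]_1$ (the cylinder of base radius $\rho$ and axis length $L$ is the dilation by $\rho$ of the radius‑$1$ cylinder of axis length $L/\rho$) gives
\[
\capac([0,L]_\rho)=\rho^{d-2}\,\capac\!\big([0,L/\rho]_1\big).
\]
From \cite{portstone} there are absolute constants such that $c\,\ell\le\capac([0,\ell]_1)\le c'\,\ell$ for $d>3$ and $c\,\ell/\log\ell\le\capac([0,\ell]_1)\le c'\,\ell/\log\ell$ for $d=3$, valid for all $\ell$ bounded below. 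Substituting $\ell=L/\rho$ (which satisfies $\ell\ge L_0/\rho_0$ under the hypotheses) and multiplying by $\rho^{d-2}$ gives $c\,\rho^{d-3}L\le\capac([0,L]_\rho)\le c'\,\rho^{d-3}L$ when $d>3$, and $c\,L/\log(L/\rho)\le\capac([0,L]_\rho)\le c'\,L/\log(L/\rho)$ when $d=3$, with constants depending only on $d,L_0,\rho_0$ — precisely the claimed bounds.

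The only slightly delicate point, which I would treat with a touch of care, is the compatibility of the ranges of validity: the estimates quoted from \cite{portstone} hold for aspect ratios $\ell=L/\rho$ exceeding some fixed threshold, whereas the hypotheses only guarantee $\ell\ge L_0/\rho_0$, which need not exceed that threshold. But when $\ell$ ranges over a bounded interval $[L_0/\rho_0,C]$ (with $L_0/\rho_0>1$, so the logarithm is positive), the cylinder $[0,\ell]_1$ is sandwiched between two Euclidean balls whose radii lie in a fixed compact subinterval of $(0,\infty)$, so by monotonicity of capacity \eqref{e.capprop} its capacity is bounded above and below by positive constants; since $\ell$ and $\ell/\log\ell$ are likewise bounded above and below on that interval, the displayed bounds persist there after enlarging, resp.\ shrinking, the constants. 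Absorbing this residual short‑aspect‑ratio regime into the constants completes the proof.
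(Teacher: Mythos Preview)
Your proof is correct and follows essentially the same approach as the paper: reduce to $\rho=1$ via the scaling relation $\capac([0,L]_\rho)=\rho^{d-2}\capac([0,L/\rho]_1)$ and then invoke the one-parameter cylinder estimates from \cite{portstone}. The only differences are cosmetic: you derive the scaling identity from the homogeneity of the Green function whereas the paper simply cites it, and your final paragraph on the short-aspect-ratio regime is extra caution that the paper does not include, since it states the \cite{portstone} bounds as valid for every lower threshold $L_0'\in(0,\infty)$.
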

\begin{proof}
Fix $L_0,\rho_0\in (0,\infty)$ and consider $L\ge L_0$ and $\rho\le \rho_0$. Note that $[0,L]_{\rho} = \rho [0,L/\rho]_{1}$. Hence by the homogeneity property of the capacity, see Proposition 3.4 p.67 in \cite{portstone}, we have
\[
\capac ( [0,L]_{\rho}) = \rho^{d-2} \capac([0,L/\rho]_{1}).
\]
We then utilize the following bounds, see Proposition 1.12 p.60 and Proposition 3.4 p.67 in \cite{portstone}: For each $L_0'\in (0,\infty)$ there are constants $c,c'$ such that
\begin{align*}
& c L\le \capac([0,L]_{1}) \le c'L,\ d >3,\\
& c L/\log(L) \le \capac([0,L]_{1}) \le c'L/\log(L),\ d =3,
\end{align*}
for $L\ge L_0'$. The results follows, since $L/\rho\ge L_0/\rho_0$.
\end{proof}
Observe that by invariance, Proposition 3.4 p.67 in \cite{portstone},
\[
\capac([x,y]_\rho) = \capac( [0,|x-y|]_\rho).
\]
Next, we discuss the probability that a given line segment of length $r$ is contained in $\V$, that is
$f(r)$. Note that for $x,y\in \R^d$,
\[
\{x \geo y \}   =\left\{\omega \in \Om : \omega_\alpha \left(W^*_{ [x,y]^\rho} \right)=0\right\}.
\]  
Since under $\Pm$, $\omega$ is a Poisson point process with intensity measure $\nu \otimes \id \alpha $ we get that
\begin{equation}\label{visibility}
 f(|x-y|) = \e^{-\alpha \capac([x,y ]^\rho) }.
\end{equation}
 
Since $[x,y]^\rho$ is the union of the cylinder $[x,y]_\rho$ and two half-spheres of radius $\rho$, it follows using \eqref{e.capprop} that

\begin{equation}
c(\alpha)\e^{-\alpha \capac([x,y ]_\rho)} \le f(|x-y|)\le \e^{-\alpha \capac([x,y ]_\rho) }.
\end{equation}

% Now, note that 
%\[ 
%[0,rx' ]_\rho \subset [0,rx' ]^\rho \subset [-\rho x,(r+\rho)x' ]_\rho,
%\]
%which implies 
%\begin{equation}\label{prelest}
%\e^{-\alpha \capac([-\rho x,(r+\rho)x' ]_\rho) }  \leq f(r) \leq \e^{-\alpha \capac([0,rx' ]_\rho) }.
%\end{equation}

%\begin{lem}\label{visibility_line} For $r>0,$ there exists constants $c(d),c'(\rho,d),C(d)>0$ such that
%\begin{align}
%c' \e^{  -c \alpha r\rho^{d-3} }
%\leq
%&f(r)
%\leq
% \e^{  -C \alpha r\rho^{d-3} },\ d\geq 4  \\
%c'  \e^{-cr/\rho\log(r/\rho) }
%\leq
%&f(r)
%\leq
% \e^{-Cr/\rho\log(r/\rho) },\ d=3.
%\end{align}
%\end{lem}
%

%A similar result holds for $f_{cyl}$ since $\mu(\Line_{C(L,\rho)}) = | \partial C([0,rx],\rho)| = c(d,\rho) r$.
%We end this section with two lemmas that will be useful when proving the lower bound
%\begin{lem}\label{est_meas}
%Let $d \geq 4$, $L$ be a line and $c(L,\rho)=c(L)$ its associated cylinder of radius $\rho$. Then there is a positive constant $c(d)<\infty$ such that for all $K \Subset \R^d$ satisfying $\dist(K,c(L,\rho))> c$ it holds that
%\begin{equation}
%\nu(W^*_K\setminus W^*_{c(L)} ) \geq \frac{1}{2} \nu(W^*_K).
%\end{equation}
%\end{lem}
%\begin{proof}
%Since
%\[
%\nu(W^*_K) = \nu(W^*_K \setminus W^*_{c(L)}) + \nu(W^*_K \cap W^*_{c(L)}),
%\]

The next lemma will be used in the proof of~\eqref{e.bounds5}. 

\begin{lem}\label{est_meas}
Let $d \geq 4$ and $L$ be a bi-infinite line. Let $L_r$ be a line segment of length $r\ge 1$. There are constants $c(d,\rho),\,c'(d,\rho)$ such that
\begin{equation}
\nu(W^*_{L_r^{\rho}}\setminus W^*_{L^{\rho} }) \geq (1-c\, \dist(L_r,L)^{-(d-3)} )\nu(W^*_{L_r^{\rho}}).
\end{equation}
whenever $\dist(L_r,L)\ge c'$.
\end{lem}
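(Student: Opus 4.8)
The goal is to show that a bi-infinite line $L$, thickened by $\rho$, catches only a vanishing fraction of the $\nu$-mass of trajectories that visit a far-away segment $L_r$ (thickened by $\rho$). I would estimate the complementary event: the $\nu$-mass of trajectories that hit $L_r^\rho$ \emph{and also} hit $L^\rho$, i.e.\ $\nu(W^*_{L_r^\rho}\cap W^*_{L^\rho})$, and show this is at most $c\,\dist(L_r,L)^{-(d-3)}\,\nu(W^*_{L_r^\rho})$.

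The key tool is the description of $\nu$ restricted to trajectories hitting a compact set $K=L_r^\rho$ via $\pi\circ Q_K$, together with~\eqref{e.forwardtimes}: a trajectory under $Q_K$, read in forward time from its first visit to $K$, is distributed as Brownian motion started from the (normalized) equilibrium measure on $K$, and by time-reversal the backward part is governed by $P^{L_r^\rho}_y$, which is dominated by unconditioned Brownian motion. So I would write
\[
\nu(W^*_{L_r^\rho}\cap W^*_{L^\rho})
\le \capac(L_r^\rho)\,\sup_{y\in L_r^\rho}P_y\big(H_{L^\rho}<\infty \text{ for }(X_t)_{t\ge0}\text{ or its reversal}\big)
\le c\,\capac(L_r^\rho)\,\sup_{y\in L_r^\rho}P_y(H_{L^\rho}<\infty),
\]
using that both time-directions of $Q_K$ give (sub-)Brownian laws and a union bound over the two directions. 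Then I invoke $\nu(W^*_{L_r^\rho})=\capac(L_r^\rho)$, so the claimed inequality reduces to the hitting estimate $\sup_{y\in L_r^\rho}P_y(H_{L^\rho}<\infty)\le c\,\dist(L_r,L)^{-(d-3)}$.

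For that hitting estimate, fix $y\in L_r^\rho$ with $\dist(y,L)\ge \dist(L_r,L)=:D$ (up to an additive $\rho$). Since $L^\rho$ is contained in a countable union of balls $B(z_k,2\rho)$ with centers $z_k$ spaced distance $\rho$ apart along $L$, one has $P_y(H_{L^\rho}<\infty)\le \sum_k P_y(H_{B(z_k,2\rho)}<\infty)=\sum_k \frac{\capac(B(0,2\rho))}{\capac(B(0,2\rho))+\text{(correction)}}\lesssim \sum_k |y-z_k|^{-(d-2)}$, using the standard formula $P_y(H_{B(z,s)}<\infty)= (s/|y-z|)^{d-2}$ for $|y-z|>s$ (from the Green's function / last-exit decomposition, Theorem 8.8 in \cite{morters2010brownian}). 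Writing $|y-z_k|^2 \asymp D^2 + (k\rho)^2$ and comparing the sum to an integral gives $\sum_k (D^2+(k\rho)^2)^{-(d-2)/2}\asymp D^{-(d-2)}\cdot D = D^{-(d-3)}$ (the sum over the line contributes one extra power of $D$), valid for $d\ge 4$ so that the series converges. This yields $P_y(H_{L^\rho}<\infty)\le c\,D^{-(d-3)}$ for $D$ larger than some constant $c'$, which is exactly what is needed. Combining the three displays and taking complements inside $\nu(W^*_{L_r^\rho})$ gives the lemma.

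\textbf{Main obstacle.} The delicate point is handling the \emph{backward} portion of the trajectory under $Q_{L_r^\rho}$ cleanly: the backward part has law $P^{L_r^\rho}_y$, Brownian motion conditioned to avoid $L_r^\rho$, and one must argue that conditioning away from $L_r^\rho$ does not increase the probability of hitting the distant set $L^\rho$ — this is intuitively clear (conditioning the path to stay away from a set near $y$ pushes it away from $y$, if anything helping it reach $L^\rho$ is not obviously monotone) so rather than a monotonicity claim I would just bound $P^{L_r^\rho}_y(H_{L^\rho}<\infty)\le P_y(H_{L^\rho}<\infty)/P_y(H_{L_r^\rho}=\infty)$ and absorb the denominator, or better, note $P^{L_r^\rho}_y$-a.s.\ the path avoids $L_r^\rho$ and condition on the first exit of a small ball $B(y,\rho/2)$ to reduce to an unconditioned hitting estimate from distance $\gtrsim D$; either way the constant is harmless. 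The capacity/geometry bookkeeping (covering $L^\rho$ by balls, the integral comparison, the restriction $d\ge 4$) is routine once this is in place.
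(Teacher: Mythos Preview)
Your overall strategy---bound $\nu(W^*_{L_r^\rho}\cap W^*_{L^\rho})$ by $\capac(L_r^\rho)$ times $\sup_{y\in\partial L_r^\rho}P_y(H_{L^\rho}<\infty)$, then control the hitting probability by covering $L^\rho$ with balls and summing $(2\rho/|y-z_k|)^{d-2}$---is sound and produces the correct bound $c\,\dist(L_r,L)^{-(d-3)}$. It differs from the paper's route: there, \emph{both} $L^\rho$ and $L_r^\rho$ are covered by unit-scale balls and the two-ball estimate $\nu(W^*_{B(z,s)}\cap W^*_{B(y,s)})\le c\,|z-y|^{-(d-2)}$ (quoted from \cite{li2016percolative}) is applied to each pair; the resulting double sum yields $c\,r\,\dist(L_r,L)^{-(d-3)}$, which is then compared with $\nu(W^*_{L_r^\rho})\ge c\,r$ from Lemma~\ref{l.capacest}. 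By reducing everything to the symmetric ball--ball situation, the paper never has to confront the conditioned law $P_y^{L_r^\rho}$ at all.

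That confrontation is where your proposal has a genuine gap. Fix~(a) fails outright: under $Q_{L_r^\rho}$ the starting point $y$ lies on $\partial L_r^\rho$, every boundary point of the capsule is regular, and therefore $P_y(\tilde H_{L_r^\rho}=\infty)=0$; you are dividing by zero, not ``absorbing a harmless constant''. Fix~(b) does not close the gap either: the exit point of $B(y,\rho/2)$ under $P_y^{L_r^\rho}$ can lie arbitrarily close to $\partial L_r^\rho$, so its escape probability from $L_r^\rho$ is not uniformly bounded below and the same ratio trick still breaks. (One can push~(b) through by analysing the exit law of the $h$-process---locally a Bessel(3) in the normal coordinate---but that is real work you have not supplied, and uniformity in $r$ must be checked.) A clean repair that stays within your framework is to apply $Q_K$ with $K=L_r^\rho\cup C$ for a compact piece $C\subset L^\rho$: the backward part then avoids $K$ and hence both sets, so only the forward Brownian motion contributes, and the sweeping inequality $e_K|_{\partial L_r^\rho}\le e_{L_r^\rho}$ together with Green-function symmetry gives $\nu(W^*_{L_r^\rho}\cap W^*_{C})\le 2\,P_{e_{L_r^\rho}}(H_{C}<\infty)$; letting $C\uparrow L^\rho$ then feeds directly into your ball-covering estimate.
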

\begin{proof}
For simplicity we assume through the proof that $r\ge 1$ is an integer and that one of the endpoints of $L_r$ minimizes the distance between $L$ and $L_r$. The modification of the proof to the case of general $r\ge 1$ and general orientations of the line and the line-segment is straightforward. We write
\begin{equation}\label{e.setsplit}
\nu(W^*_{L_r^{\rho}}) = \nu(W^*_{L_r^{\rho}} \setminus W^*_{L^{\rho}}) + \nu(W^*_{L_r^{\rho}} \cap W^*_{L^{\rho}}),
\end{equation}
and focus on finding a useful upper bound of the second term of the right hand side.

We now write $L = \left(  \gamma_1(t) \right)_{t \in \R}$, where $\gamma_1$ is parametrized to be unit speed and such that $\dist(L_r,\gamma_1(0)) = \dist(L_r,L).$ Similarly, we write $L_r=\left(\gamma_2(t)\right)_{0\le t\le r}$ where $\gamma_2$ has unit speed and $\dist(\gamma_2(0),L)=\dist(L_r,L)$. For $i\in \Z$ and $0\le j\le r-1$ let $y_i = \gamma_1(i)$  and let $z_j=\gamma_2(j)$. Choose $s = s(\rho)<\infty$ such that 
\[
L^{\rho} \subset \bigcup_{i \in \Z} B(y_i,s)\mbox{ and }L^{\rho}_r \subset \bigcup_{i =0}^{r-1} B(z_i,s).
\]
We now have that 
\begin{align*}
\nu(W^*_{L_r^{\rho}} \cap W^*_{L^{\rho}})&\le \sum_{i\in \Z}\sum_{j=0}^{r-1} \nu(W^*_{B(z_j,s)}\cap W^*_{B(y_i,s)})\\ & \le\sum_{i\in \Z}\sum_{j=0}^{r-1} \frac{c}{|z_j-y_i|^{(d-2)}}\le  c\,r  \sum_{i\in \Z} \frac{1}{|z_0-y_i|^{d-2}}\\ & \le c\,r \sum_{i\in \Z} \frac{1}{\left( \dist(L,L_r)^2 +i^2\right)^{\frac{d-2}{2}}}\le c_1\, r \,\dist(L,L_r)^{-(d-3)},
\end{align*}
where the second inequality follows from Lemma 2.1 on p.14 in \cite{li2016percolative}.
Combining this with the fact from Lemma~\ref{l.capacest} that $\nu(W^*_{L_r^\rho})\ge c_2 r$ whenever $r\geq 1 $, we get that
$$ \nu(W^*_{L_r^{\rho}} \cap W^*_{L^{\rho}})\le \frac{c_1}{c_2} \nu(W^*_{L_r^\rho})   \dist(L,L_r)^{-(d-3)},$$
which together with~\eqref{e.setsplit} gives the result.

\end{proof}

\remark
Observe the the Lemma above implies that for every $r>1$, and every line $L$ and line-segment $L_r$ of length $r$ satisfying $\dist(L,L_r)>c,$ we have
\[
\nu(W^*_{L_r^\rho} \setminus W^*_{L^\rho}) \geq \frac{1}{2} \nu(W^*_{L_r^\rho}).
\]
It is easy to generalize the statement to hold for every $r>0$.
 %%%% THE LOWER BOUND %%%%
\section{Proof of Theorem~\ref{t.euclidmain}}\label{s.euclidproof}

We split the proof of Theorem~\ref{t.euclidmain} into the proofs of two propositions, Proposition~\ref{lowerbound} which is the lower bound~\eqref{e.bounds5} and Proposition~\ref{upperbound} which is the upper bound~\eqref{e.bounds4}.

\subsection{The lower bound}

To get a lower bound we will utilize the second moment method. More precisely we shall modify the arguments from the proof of Lemma $3.6$ on p.$332$ in \cite{benjamini2009visibility}. Let
 $\sigma(\id x)$ denote the surface measure of $S^{d-1}$, and for $r>0$ define
\begin{align}
&  Y_r := \left\{ x \in S^{d-1} :  [0 , r x] \subset \V \right\},\\
&  y_r:= |Y_r| = \int_{S^{d-1}} \mathbbm{1}_{Y_r}(x) \sigma(\id x).
\end{align}
The expectation and the second moment of $y_r$ are computed using Fubini's theorem:
\begin{align}
&\E (y_r) = |S^{d-1}|f(r)\label{e.moment1} \\
&\E (y_r^2) = \int_{(S^{d-1})^2} \Pm(x,x' \in Y_r) \sigma(\id x) \sigma(\id x'),\label{e.moments}
\end{align}
where $f(r)$ is given by \eqref{visibility} above. The crucial part of the proof of the lower bound in~\eqref{e.bounds4} is estimating~\eqref{e.moments} from above.

\begin{prop}\label{lowerbound}
Let $d \geq 4$. There exist constants $c(\alpha) , c' $ such that 
\begin{equation}
\pvis(r) \geq c\, r^{d-1} f(r) \mbox{ for all }r\ge c'.
\end{equation}
\end{prop}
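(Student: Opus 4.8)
The plan is to apply the Paley--Zygmund / second moment inequality to the random variable $y_r = |Y_r|$, which gives
\[
\pvis(r) \ge \Pm(y_r > 0) \ge \frac{(\E y_r)^2}{\E (y_r^2)}.
\]
By~\eqref{e.moment1} we have $\E y_r = |S^{d-1}| f(r)$, so it suffices to show $\E(y_r^2) \le c\, r^{-(d-1)} f(r)$ for $r \ge c'$. In view of~\eqref{e.moments}, I would estimate $\Pm(x, x' \in Y_r)$ for $x, x' \in S^{d-1}$, i.e. the probability that both segments $[0, rx]$ and $[0, rx']$ avoid $\bi$. Since $\{x, x' \in Y_r\} = \{\omega_\alpha(W^*_{[0,rx]^\rho \cup [0,rx']^\rho}) = 0\}$ and $\omega_\alpha$ is Poisson with intensity $\alpha\nu$, this probability equals $\exp(-\alpha\, \nu(W^*_{[0,rx]^\rho \cup [0,rx']^\rho}))$. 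The key is the inclusion-exclusion-type bound
\[
\nu(W^*_{[0,rx]^\rho \cup [0,rx']^\rho}) \ge \nu(W^*_{[0,rx]^\rho}) + \nu(W^*_{[0,rx']^\rho}) - \nu(W^*_{[0,rx]^\rho} \cap W^*_{[0,rx']^\rho}),
\]
so that $\Pm(x, x' \in Y_r) \le f(r)^2 \exp(\alpha\, \nu(W^*_{[0,rx]^\rho} \cap W^*_{[0,rx']^\rho}))$; the correction factor measures how much the two tubes "interact".

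The heart of the argument is therefore to bound $\nu(W^*_{[0,rx]^\rho} \cap W^*_{[0,rx']^\rho})$ in terms of the angle between $x$ and $x'$. I would split the two segments at a common radius: write $[0, rx] = [0, \delta x] \cup [\delta x, rx]$ and similarly for $x'$, where $\delta$ is comparable to $r$ times (roughly) the angular separation $\beta = \beta(x,x') \in [0,\pi]$, chosen so that the far portions $[\delta x, rx]$ and $[\delta x', rx']$ are at mutual distance $\gtrsim \delta \gtrsim r\beta$. Using a covering of each tube by $O(r)$ balls of fixed radius $s = s(\rho)$ and Lemma 2.1 of~\cite{li2016percolative} (exactly as in the proof of Lemma~\ref{est_meas}), the contribution from pairs of balls one in each far portion is $\lesssim r (r\beta)^{-(d-3)}$; the near portions $[0,\delta x]$ and $[0,\delta x']$ together have $\nu$-measure only $\lesssim \capac([0,\delta]_\rho) \lesssim \delta \lesssim r\beta$ by Lemma~\ref{l.capacest}. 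Hence, for $\beta$ not too small,
\[
\nu(W^*_{[0,rx]^\rho} \cap W^*_{[0,rx']^\rho}) \le c_1\bigl(r\beta + r(r\beta)^{-(d-3)}\bigr),
\]
and when $\beta$ is very small one uses the crude bound $\nu(W^*_{[0,rx]^\rho} \cap W^*_{[0,rx']^\rho}) \le \nu(W^*_{[0,rx]^\rho}) \le c_2 r$.

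With this in hand I would plug back into~\eqref{e.moments}, parametrizing $x'$ by $x$ together with the angle $\beta$ (so $\sigma(\id x') $ contributes a factor $\sim \beta^{d-2}\,\id\beta$ near $\beta = 0$), and integrate. The near regime $\beta \le 1/r$ contributes $\lesssim r^{-(d-1)} f(r)^2 e^{c r} $ — no, one must instead bound that contribution by $\lesssim r^{-(d-1)}\cdot f(2\delta)\cdots$; more precisely on $\beta \le 1/r$ one simply bounds $\Pm(x,x'\in Y_r) \le f(r)$ (dropping one constraint), and the $x'$-integral over this cap has measure $\lesssim r^{-(d-1)}$, giving a contribution $\lesssim r^{-(d-1)} f(r)$. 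For $\beta \gtrsim 1/r$ one uses $\Pm(x,x'\in Y_r) \le f(r)^2 \exp(c_1 \alpha(r\beta + r(r\beta)^{-(d-3)}))$ and checks that the resulting integral $\int \beta^{d-2} f(r)^2 e^{c_1\alpha r\beta}\,\id\beta$ is dominated, after using $f(r)^2 = e^{-2\alpha\capac([0,r]_\rho)}$ and $\capac([0,r]_\rho) \gtrsim r$ (for $d \ge 4$), by $c\, r^{-(d-1)} f(r)$ — the exponential gain $e^{-\alpha\capac([0,r]_\rho)} = f(r)(1+o(1))$ beats the $e^{c_1\alpha r\beta}$ loss as long as $\beta$ stays below a small constant, and the tail $\beta \gtrsim$ const is handled by the $r(r\beta)^{-(d-3)}$ term which is then bounded. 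The main obstacle is precisely this last balancing: one needs the linear-in-$r$ growth of $\capac([0,r]_\rho)$ (hence the restriction $d \ge 4$, since for $d = 3$ the capacity grows only like $r/\log r$ and the argument breaks) to absorb the interaction cost $e^{c_1\alpha r\beta}$, and one must choose the splitting radius $\delta \asymp r\beta$ and the threshold between the two regimes carefully so that both the angular integral and the exponential factors combine to give exactly the order $r^{-(d-1)} f(r)$. Once $\E(y_r^2) \le c\, r^{-(d-1)} f(r)$ is established, Paley--Zygmund yields $\pvis(r) \ge \Pm(y_r > 0) \ge (\E y_r)^2/\E(y_r^2) \ge c\, r^{d-1} f(r)$, as claimed.
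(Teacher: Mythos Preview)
Your overall second-moment strategy matches the paper's, and the small-angle regime $\beta\le 1/r$ (drop one constraint, integrate over a cap of measure $\asymp r^{-(d-1)}$) is correct. The gap is in your bound on $\nu(W^*_{[0,rx]^\rho}\cap W^*_{[0,rx']^\rho})$. First, a geometric slip: the far portions $[\delta x,rx]$ and $[\delta x',rx']$ are at mutual distance $\asymp\delta\sin\beta$, not $\asymp\delta$; so the splitting radius that places them a fixed positive distance apart is $\delta\asymp 1/\sin\beta$ (independent of $r$ --- this is the paper's $g(\theta)$), not $\delta\asymp r\beta$. Second, and more seriously, even granting an intersection bound of the shape $c_1 r\beta+(\text{lower order})$, the balancing you describe cannot close: for $\beta$ of order $1$ you would need $f(r)^2 e^{c_1\alpha r}\lesssim f(r)r^{-(d-1)}$, i.e.\ $c_1$ no larger than the lower constant in $\capac([0,r]^\rho)\ge c_0 r$. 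But $c_1$ comes from a covering-by-balls argument and Lemma~2.1 of~\cite{li2016percolative}, and there is no mechanism forcing it below $c_0$; your remark that the tail ``is handled by the $r(r\beta)^{-(d-3)}$ term'' ignores that the $r\beta$ term is still present.

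The paper sidesteps this constant comparison by never bounding the intersection. It drops only the near part $[0,g(\theta)x']$ of the \emph{second} segment and writes
\[
\Pm(x,x'\in Y_r)\ \le\ f(r)\,\exp\Bigl(-\alpha\,\nu\bigl(W^*_{[g(\theta)x',\,rx']^\rho}\setminus W^*_{[0,rx]^\rho}\bigr)\Bigr),
\]
then applies Lemma~\ref{est_meas} in its ``difference'' form: since the far piece sits at distance $\ge c_1$ from the full line through $x$, at least \emph{half} of its $\nu$-mass lies outside $W^*_{[0,rx]^\rho}$. This universal factor $\tfrac12$ replaces your uncontrolled constant and gives $\Pm(x,x'\in Y_r)\le c(\alpha)\,f(r)\,e^{-c_2(r-g(\theta))_+}$, which integrates over $S^{d-1}$ to $\E[y_r^2]\le c\,r^{-(d-1)}f(r)$ with no matching of constants required.
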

\begin{proof}
For $x\in S^{d-1}$ let $L_{\infty}(x)$ be the infinite half-line starting in $0$ and passing through $x$.  For $x,x' \in S^{d-1}$ define $\theta=\theta(x,x') := \arccos \left(  \langle x,x'  \rangle \right)$ to be the angle between the two half-lines $L_{\infty}(x)$ and $L_{\infty}(x')$. From Lemma~\ref{est_meas} and the remark thereafter we know that there is a constant $c_1 $ such that for every $r>0$, and every line $L$ and line-segment $L_r$ of length $r$ satisfying $\dist(L,L_r)\ge c_1$, we have 

\begin{equation}\label{e.halfmeas}
\nu(W^*_{L_r^\rho}\setminus W^*_{L^{\rho}})\ge \frac{1}{2} \nu(W^*_{L_r^{\rho}}).
\end{equation} 

Now define $g(\theta)\in(0,\infty)$ by the equation
\begin{equation}
\dist( L_{\infty}(x), L_{\infty}(x') \setminus [0,g(\theta) x']) =c_1.
\end{equation}
Elementary trigonometry shows that if $\theta \in [0, \pi/2]$ we have
\[
g( \theta) =\frac{c_1}{\sin(\theta)},
\] 
and for $\theta \in [\pi/2 , \pi]$ it is easy to see that we have $g(\theta) \leq c$. Now, for $x,x' \in S^{d-1}$,
\begin{align*}
&\Pm(x,x' \in Y_r)  \leq \Pm\left( [0,rx] \subset \V , [0,rx'] \setminus [0,g(\theta) x'] \subset \V \right) \\
& = \Pm \left(  \omega_{\alpha} \left(W^*_{[0,rx]^\rho}\right) = 0,  \omega_{\alpha} \left(W^*_{([0,rx'] \setminus [0,g(\theta) x'])^\rho}\right) = 0 \right) \\
& \leq  \Pm \left(  \omega_{\alpha} \left(W^*_{[0,rx]^\rho}\right) = 0,  \omega_{\alpha} \left(W^*_{([0,rx'] \setminus [0,g(\theta) x'])^\rho} \setminus W^*_{[0,rx]^\rho}\right) = 0 \right) \\
&\overset{indep.}{=}\Pm \left(  \omega_{\alpha} \left(W^*_{[0,rx]^\rho}\right) = 0 \right) \Pm \left(  \omega_{\alpha} \left(W^*_{([0,rx'] \setminus [0,g(\theta) x'])^\rho} \setminus W^*_{[0,rx]^\rho}\right) = 0 \right) \\
&= f(r) \exp\left\{- \alpha \nu \left(W^*_{([0,rx'] \setminus [0,g(\theta) x'])^\rho} \setminus W^*_{[0,rx]^\rho}\right)\right\} \\
&\stackrel{~\eqref{e.halfmeas}}{\leq} f(r) \exp \left\{-\frac{\alpha}{2} \nu\left( W^*_{(0,((r-g(\theta))\vee 0)x ]^\rho}\right)  \right\} \le f(r) \e^{-(c_2(\alpha) (r-g(\theta)) \vee 0)}c(\alpha),
\end{align*}
where the last inequality follows from Lemma \ref{l.capacest}. Hence, in order to get an upper bound of~\eqref{e.moments} we want to get an upper bound of
\begin{equation}\label{e.idef}
I=\int_{(S^{d-1})^2} \exp\{-c_2((r- g(\theta)) \vee 0)\} \sigma ( \id x) \sigma( \id x').
\end{equation}
In spherical coordinates $\theta,\theta_1,...,\theta_{d-2}$,  we get, with $A(\theta_1,...,\theta_{d-2})=\{(\theta_1,...\theta_{d-2})\,:\,0\le \theta_i<2\pi\mbox{ for all }i\}$, 
\begin{align*}
I&=\int_{\theta= 0}^{\pi/2} \int_{A(\theta_1,...,\theta_{d-2})} \exp\left\{-c_2((r- \frac{c_1 }{\sin(\theta)}) \vee 0 )\right\}  \sin^{d-2}(\theta) \sin^{d-3}( \theta_1) \cdot  \cdot \cdot \sin( \theta_{d-3}) \id \theta \id \theta_1 \cdot \cdot \cdot\id \theta _{d-2} \\
&+\int_{\theta=\pi/2}^{\pi} \int_{A(\theta_1,...,\theta_{d-2})} \exp\left\{-c_2((r-c ) \vee 0 )\right\}  \sin^{d-2}(\theta) \sin^{d-3}( \theta_1) \cdot \cdot \cdot \sin( \theta_{d-3}) \id \theta \id \theta_1 \cdot \cdot \cdot \id \theta _{d-2}\\
&= I_1+I_2.
\end{align*}
We now find an upper bound on the integral $I_1$. We get that
\begin{align}\label{e.twointegrals}
I_1 &\leq c_3 \int_0^{\pi/2} \exp\left\{-c_2((r-\frac{c_1}{\sin(\theta)}) \vee 0)\right\} \sin^{d-2}(\theta) \id \theta\nonumber \\
& = c_3\left(  \int_0^{\arcsin{c_1 /r}} \sin^{d-2} ( \theta) \id \theta + \int_{\arcsin{c_1/r}}^{\pi/2}\e^{-c_2(r-\frac{c_1}{\sin(\theta)}) }  \sin^{d-2} ( \theta) \id \theta \right).
\end{align}
For the first of the two integrals above we get
\begin{align}\label{e.intest1}
&\int_0^{\arcsin{c_1/r}} \sin^{d-2} ( \theta) \id \theta \le c \int_0^{c_1  /r}  \theta^{d-2} \id \theta = c' r^{-(d-1)}.
\end{align}
For the second integral in~\eqref{e.twointegrals} we get (using that $1/\sin(\theta)-1/\theta$ can be extended to a uniformly continuous function on $[0,\pi/2]$)
\begin{align}
&\int_{\arcsin{c_1/r}}^{\pi/2}\e^{-c_2(r-\frac{c_1}{\sin(\theta)}) }  \sin^{d-2} ( \theta) \id \theta \le c\, \e^{-c_2r} \int_{c_1/r}^{ \pi/2} \e^{c_1c_2/ \theta} \theta^{d-2} \id \theta=\nonumber \\
&=c\,\e^{-c_2r} \int_{ 2/\pi}^{r/c_1} \e^{c_1c_2 t} t^{-d} \id t = c\, \e^{-c_2 r} \int_{2 c_1 c_2/ \pi}^{c_2r} \e^y y^{-d} \id y  \nonumber \\ & =  c\, \e^{-c_2 r} \int_{2 c_1 c_2/ \pi}^{c_2 r/2} \e^y y^{-d} \id y +  c\, \e^{-c_2 r} \int_{c_2 r/2 }^{c_2 r} \e^y y^{-d} \id y \nonumber \\ & \le c\, \e^{-c_2 r/2} \int_{2 c_1 c_2 / \pi}^{c_2 r/2} y^{-d} \id y+c\,\int_{c_2 r/2}^{c_2 r} y^{-d} \id y \le c\, r^{-(d-1)}\label{e.intest2}.
\end{align}
Moreover, it is easy to see that
\begin{equation}\label{e.intest3}
I_2 = O( \e^{-c r}).
\end{equation}

Putting equations~\eqref{e.moments},~\eqref{e.idef},~\eqref{e.twointegrals},~\eqref{e.intest1},~\eqref{e.intest2} and ~\eqref{e.intest3} together, we obtain that for all $r$ large enough,
\begin{equation}\label{e.secondmomentupperestimate}
\E [y_r^2 ] \leq c f(r) r^{-(d-1)}.
\end{equation}
From~\eqref{e.moment1},~\eqref{e.secondmomentupperestimate} and the second moment method we get that for all $r$ large enough
\begin{equation*}
\pvis(r)\ge \frac{{\mathbb E}(y_r)^2}{{\mathbb E}(y_r^2)} \ge\,c r^{d-1}\, f(r) ,
\end{equation*}
finishing the proof of the proposition.
\end{proof}

%\remark
%For $d=3$ the best bound available to us the trivial bound $f(r) \leq P_\V(r)$, since the methods available to us does not work well in $\R^3$. For instance, applying the method in \cite{benjamini2009visibility} we can obtain a lower bound of 
%\[
%P_\V (r) \geq c \frac{\log^3(r)}{r} f(r),
%\]
%which is a worse bound than the trivial. The same results holds for the Poisson cylinder model, with the only difference being that it holds for all $d \geq 3$. The argument is completely analogous, with the only difference being that one replaces $W^*$ with $\Line$, and $\nu$ with $\mu$.
 \subsection{The upper bound}
The next proposition is~\eqref{e.bounds4} in Theorem~\ref{t.euclidmain}.

\begin{prop}\label{upperbound}
There exists a constant $c<\infty$ depending only on $d$, $\rho$ and $\alpha$ such that
\begin{align}
\pvis (r)& \lesssim  cr^{2(d-1)} f(r),\ d\ge 3.\label{e.bounds6}
\end{align}
\end{prop}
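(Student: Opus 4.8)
The plan is to bound $\pvis(r)$ by a union bound over a suitable $r$-dependent net of directions on $S^{d-1}$, using the simple fact that if the visibility from $0$ exceeds $r$, then some genuine line segment $[0,rx]$ with $x\in S^{d-1}$ lies in $\V$, and hence every segment $[0,rx']$ with $x'$ close enough to $x$ lies in the $\rho$-tube around $[0,rx]$ and is therefore also at least "$(r-O(1))$-visible" after discarding a bounded piece near the origin. Concretely, I would fix a maximal $\delta$-separated set $\{x_1,\dots,x_N\}\subset S^{d-1}$ with $\delta=\delta(r)$ to be chosen (so $N\asymp \delta^{-(d-1)}$), and observe that on the event $\{0\geo \partial B(r)\}$ there is a witnessing direction $x$ and some $x_i$ with $\angle(x,x_i)\le\delta$. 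Elementary geometry then shows $[x_i(r-c\,\delta r),x_i r]$ (or, more robustly, the segment $[0,x_i(r-c)]$ shifted slightly to lie inside the tube $[0,rx]^\rho$) is contained in $\V$; choosing $\delta\asymp 1/r$ makes the "lost" length a bounded constant $c$, so each $x_i$ contributes an event of probability at most $f(r-c)$. A union bound gives $\pvis(r)\le N\cdot f(r-c)\lesssim r^{d-1} f(r-c)$.

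The remaining issue is to compare $f(r-c)$ with $f(r)$: from~\eqref{visibility} we have $f(r)=\exp(-\alpha\,\capac([0,r]^\rho))$, and Lemma~\ref{l.capacest} together with~\eqref{e.capprop} gives $\capac([0,r]^\rho)\le \capac([0,r-c]^\rho)+c''$ for $d\ge 3$ (the capacity of a length-$r$ cylinder exceeds that of a length-$(r-c)$ one by at most a bounded amount, since we can bound the difference by the capacity of a bounded cylinder), hence $f(r-c)\le c(\alpha)f(r)$. This already yields $\pvis(r)\lesssim c\, r^{d-1}f(r)$, which is in fact stronger than~\eqref{e.bounds6}; the extra factor $r^{d-1}$ in the statement leaves room for a cruder net, e.g. if one only controls the tube up to a multiplicative slack of order $r$ in length rather than an additive constant. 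To be safe I would carry out the argument with $\delta\asymp 1/r$ and the additive-constant slack; if the geometry forces a weaker estimate — e.g. that a $\delta$-rotation of a length-$r$ segment only stays in the $\rho$-tube up to distance $\rho/\delta$ from the origin, costing a multiplicative factor and an extra $N\asymp r^{d-1}$ worth of segments to cover the near-origin part — one still obtains at worst $\pvis(r)\lesssim c\, r^{2(d-1)}f(r)$, matching~\eqref{e.bounds6}.

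More precisely, the clean way to organize the near-origin problem: decompose $[0,rx]$ into $[0,Rx]\cup[Rx,rx]$ where $R=R(\delta)=c\rho/\delta$ is the distance beyond which a $\delta$-rotation about $0$ keeps the segment inside the $\rho$-tube. The far part contributes, via a net of $N_1\asymp\delta^{-(d-1)}$ directions, a bound $N_1 f(r-R)$. The near part $[0,Rx]$ is a bounded-length (when $R\asymp r$, length $\asymp r$) segment whose visibility we bound trivially by $1$, or, better, we note that requiring $[0,rx]\subset\V$ is stronger than requiring the far part alone, so we simply drop the near part. Taking $\delta\asymp 1/r$ gives $R\asymp r$ — too large — so instead I take $\delta$ constant order? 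No: the balance is $N_1 f(r-R)$ with $f(r-R)\le c(\alpha)e^{\alpha c\, R/\text{(log factor)}}f(r)$, so $R$ must stay bounded, forcing $\delta\asymp 1/r$ and hence $N_1\asymp r^{d-1}$; the leftover near-origin segment then has length $R\asymp$ const, contributing nothing.

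The main obstacle I anticipate is precisely this geometric bookkeeping — making rigorous the claim that a small rotation of $[0,rx]$ stays inside $[0,rx']^\rho$ except near the origin, with explicit control of the thresholds, and then converting "$[0,rx']$ lies in a tube that is vacant" into "$[0,rx']$-ish segment is vacant" in a way that fits the definition of $f$. Writing $R_{x,x'}$ for the rotation taking $x$ to $x'$, one checks $|R_{x,x'}(tx)-tx|\le t\,|x-x'|\le t\delta$, so $R_{x,x'}(tx)\in[0,rx']^\rho$ once $t\delta\le\rho$, i.e. for $t\le\rho/\delta$; combined with $\{0\geo\partial B(r)\}\subset\bigcup_i\{[0,rx_i]^\rho\cap\bi=\emptyset \text{ on } t\ge\rho/\delta\}$ and monotonicity of capacity, a union bound plus $f(r-\rho/\delta)\le c(\alpha)f(r)$ (valid when $\rho/\delta$ is bounded, hence $\delta\gtrsim\rho/1$, contradiction again) — so the honest conclusion is that one takes $\delta = \rho/K$ for a large constant $K$, gets $N\asymp 1$ directions but each only controls the segment beyond radius $K$, then iterates/rescales: cover $\partial B(r)$ at angular scale $\delta\asymp 1/r$, accept the multiplicative slack, and absorb everything into the stated $r^{2(d-1)}$. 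I would present the argument at angular resolution $\delta = c/r$, union-bounding over the $\asymp r^{d-1}$ resulting directions, each contributing $f(r(1-c/r)\cdot) = f(r-c')\le c(\alpha) f(r)$ after the capacity comparison, and note that the weaker exponent $2(d-1)$ in~\eqref{e.bounds6} comfortably accommodates any loss incurred in handling the bounded near-origin region separately (at the cost of an additional $\asymp r^{d-1}$ factor from covering it).
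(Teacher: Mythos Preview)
Your proposal contains a genuine gap at the very first step. You claim that if $[0,rx]\subset\V$ and $x_i$ is $\delta$-close to $x$, then (after discarding a bounded piece) $[0,(r-c)x_i]\subset\V$, yielding $\pvis(r)\le N\,f(r-c)$ with $N\asymp r^{d-1}$. But recall that $[0,rx]\subset\V$ means precisely that no trajectory enters the tube $[0,rx]^\rho$. Even if the segment $[0,rx_i]$ lies entirely inside $[0,rx]^\rho$, this only tells you that no trajectory \emph{touches} $[0,rx_i]$; it does \emph{not} say that no trajectory enters $[0,rx_i]^\rho$, which is what $[0,rx_i]\subset\V$ requires. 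The two $\rho$-tubes $[0,rx]^\rho$ and $[0,rx_i]^\rho$ differ by a region of width $\asymp r\delta$ near the far end, and nothing prevents a trajectory from sitting there. So the deterministic inclusion $\{0\geo\partial B(r)\}\subset\bigcup_i\{0\geo (r-c)x_i\}$ that you need is simply false, and the stronger bound $r^{d-1}f(r)$ does not follow from this covering argument.

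The paper's proof repairs exactly this defect by \emph{thinning the tube radius rather than shortening the segment}. One covers $\partial B(r)$ by balls of radius $\epsilon$ (so $N\asymp (r/\epsilon)^{d-1}$), and observes that if $[0,rx]\subset\V$ and $|rx-x_j|\le\epsilon$, then the vacant segment $[0,rx]$ is a curve inside $[0,x_j]^\epsilon$ joining $B(0,\epsilon)$ to $B(x_j,\epsilon)$; a short topological argument then shows this forces $\omega_\alpha(W^*_{[0,x_j]^{\rho-\epsilon}})=0$. Thus each $x_j$ contributes probability $\exp\{-\alpha\,\capac([0,x_j]^{\rho-\epsilon})\}$, and the crux becomes showing
\[
\capac\bigl([0,rx]^\rho\bigr)-\capac\bigl([0,rx]^{\rho-\epsilon}\bigr)=O(1)\quad\text{when }\epsilon=1/r,
\]
which the paper proves via a hitting-probability estimate for Brownian motion started on $\partial[0,rx]^\rho$. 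With $\epsilon=1/r$ one gets $N\asymp r^{2(d-1)}$, and that is where the exponent in~\eqref{e.bounds6} comes from --- not from any slack in handling a ``near-origin region''. Your capacity comparison $f(r-c)\le c(\alpha)f(r)$ is correct but irrelevant; the comparison actually needed is between tubes of the \emph{same length} and radii $\rho$ versus $\rho-1/r$.
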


\begin{proof}
%The lower bound has already been proved in Proposition \ref{lowerbound}. Hence it only remains to prove the upper bound.
%Let
%\[
%D(x,y,s) = \{ \omega \in \Om : \not \exists w\in \omega \ :\ [x,y]_{s}\setminus [w]^\rho 
%\text{ consists of two disjoint components } \}
%
%and let $Q(x,y,s)$ be the event that $[x,y]_s \subset \V$. Moreover,
 
Fix $r>0$, $x,y\in {\mathbb R}^d$ and $\epsilon\in (0,\rho)$. Let $M(x,y,\epsilon) = \omega_\alpha(W^{*}_{[x,y]^{\rho -\epsilon}} )$ and let $A(x,y,\epsilon)$ be the event that there is a connected component of $[x,y]^{\epsilon} \cap \V $ that intersects both $B(x,\epsilon)$ and $B(y,\epsilon)$. Observe that on the event that $M(x,y,\epsilon)\ge 1$, there is some $z\in [x,y]$ such that $d(z,\bi )\le \rho-\epsilon$. For this $z$, we have $B(z,\epsilon)\subset \bi$. Any continuous curve $\gamma\subset [x,y]^{\epsilon}$ intersecting both $B(x,\epsilon)$ and $B(y,\epsilon)$ must also intersect $B(z,\epsilon)$. Hence, $\{M(x,y,\epsilon)\ge 1\}\subset A(x,y,\epsilon)^c$, and we get that

\begin{equation}\label{e.Aincl}
A(x,y,\epsilon) \subset \{M(x,y,\epsilon) = 0\}.
\end{equation}

Now we let
\begin{equation}\label{coveringnumber}
N(\epsilon,r) = \inf\left\{k \in \N: \exists x_1,x_2,..., x_k \in \partial B(r) \text{ such that } \bigcup_{i=1}^k B(x_i,\epsilon) \supset \partial B(r)\right\}
\end{equation}
be the covering number for a sphere of radius $r$, and note that $N(\epsilon,r) = O( (r/\epsilon)^{d-1} )$. For each $r>0$, let $(x_i)_{i=1}^{N(\epsilon,r)}$ be a set of points on $\partial B(r)$ such that $\partial B(r)\subset \cup_{i=1}^{N(\epsilon,r)} B(x_i,\epsilon)$. If $\{0 \geo \partial B(r)\}$ occurs there exists a $j \in \{1,2,...,N(\epsilon,r)\}$ such that $A(0,x_j,\epsilon)$ occurs. Hence, by the union bound and rotational invariance (Equation $2.28$ in~\cite{sznitman2013scaling}), 
\begin{align}\label{e.visest1}
\pvis(r) &\leq P\left(\bigcup_{i= 1}^{N(\epsilon,r)} A(0,x_i, \epsilon)\right) \le  N(\epsilon,r) P(A(0,x_1,\epsilon))\nonumber\\
 &\stackrel{~\eqref{e.Aincl}}{\leq}  O( (r/\epsilon)^{d-1})  \Pm(M(0,x_1,\epsilon)=0) .
\end{align}
Fix $x\in S^{d-1}$ and let $K_1=K_1(r,\rho) = [0,rx]^{\rho}$ and  $K_2=K_2(r,\rho,\epsilon)= [0,rx]^{\rho-\epsilon}$. Then 
\[
f(r) = \e^{-\alpha \capac(K_1)}
\]
and
\[
 \Pm(M(0,x_1,\epsilon)=0)  = \e^{-\alpha \capac(K_2)}.
\]
Hence,
\begin{equation}\label{e.visest2}
  \Pm(M(0,x_1,\epsilon)=0) =f(r)  \e^{\alpha(\capac(K_1) -\capac(K_2)) }
\end{equation}

We will now let $\epsilon=\epsilon(r)=1/r$ for $r \ge \rho^{-1}$ and show that 
\begin{equation}\label{e.o1eq}
\capac(K_1) -\capac(K_2) =O(1),\ r \to \infty.
\end{equation}

Let $((B_i(t))_{t\ge 0})_{i\ge 1}$ be a collection of i.i.d.\ processes with distribution $P_{\tilde{\e}_{K_1}}$ where $\tilde{\e}_{K_1}=\e_{K_1}/\capac(K_1).$
Recall that $[B_i]$ stands for the trace of $B_i$. Using the local description of the Brownian interlacements, see Equation \eqref{e.local}, we see that 

\begin{equation}\label{e.o2eq}
 \omega_\alpha(W^*_{K_1} \setminus W^*_{K_2}) \eqdist \sum_{i=1}^{N_{K_1}} {\bf 1}\{[B_i]\cap K_2=\emptyset\},
 \end{equation}
where $N_{K_1}$ is a Poisson random variable with mean $\alpha \capac(K_1)$ which is independent of the collection $((B_i(t))_{t\ge 0})_{i\ge 1}$, and the sum is interpreted as $0$ in case $N_{K_1}=0$. Taking expectations of both sides in~\eqref{e.o2eq} we obtain that
%to show~\eqref{e.o2eq} it suffices to prove that
%\begin{equation}\label{e.o3eq}
%\Pm \left(\bigcup_{i=1}^{N_{K_1}}\{ [B_i] \cap K_2 =\emptyset\} \right) \to 0, r \to \infty,
%\end{equation} 

\begin{align}\label{e.o5eq}
\alpha\nu(W^*_{K_1} \setminus W^*_{K_2}) &=E\left[\sum_{i=1}^{N_{K_1}} {\bf 1}\{[B_i]\cap K_2=\emptyset\}\right]\nonumber \\ & =E[N_{K_1}]P([B_1]\cap K_2=\emptyset)=\alpha \capac(K_1) P([B_1]\cap K_2=\emptyset),
\end{align}

where we used the independence between $N_{K_1}$ and $((B_i(t))_{t\ge 0})_{i\ge 1}$ and the fact the $B_i$-processes are identically distributed. 

Since $K_2\subset K_1$, it follows that
\begin{equation}\label{e.o4eq}
 \nu(W^*_{K_1} \setminus W^*_{K_2})=\capac(K_1) -\capac(K_2).
\end{equation}

From~\eqref{e.o5eq} and~\eqref{e.o4eq} it follows that

\begin{equation}\label{e.o3eq}
\capac(K_1) -\capac(K_2)=\capac(K_1) P([B_1]\cap K_2=\emptyset).
\end{equation}

Next, we find a useful upper bound on the last factor on the right hand side of~\eqref{e.o3eq}. Recall that for $t>0$ and $x\not \in B(0,t)$,

\begin{equation}\label{e.ballhitest}
P_x ( \tilde{H}_{B(0,t)} < \infty) = (t/|x|)^{d-2},
\end{equation}
see for example Corollary 3.19 on p.72 in \cite{morters2010brownian}.
%Now, by conditioning on $N_{K_1}=n \in \N$ and noting that $N_{K_1} \sim \text{Po}(\alpha \capac(K_1)$ we obtain 
%
%\begin{align*}
%\Pm \left(\bigcup_{i=1}^{N_{K_1}} [B_i] \cap K_2 = \emptyset \right) &= \sum_{n \geq 1} \Pm \left(\bigcup_{i=1}^{n} [B_i] \cap K_2 = \emptyset \right) \frac{\left( \alpha \capac(K_1) \right)^n}{n!} \e^{-\alpha \capac(K_1)}  \\ 
%&= \sum_{n \geq 1} \left(P_{e_{K_1}} \left( \tilde{H}_{K_2} = \infty \right) \right)^n  \frac{\left( \alpha \capac(K_1) \right)^n}{n!} \e^{-\alpha \capac(K_1)}.
%\end{align*}
Now,
\begin{equation}\label{e.hitintest}
P([B_1]\cap K_2=\emptyset)=P_{\tilde{\e}_{K_1}} \left( \tilde{H}_{K_2} = \infty \right) = \int_{\partial K_1} P_{y} ( \tilde{H}_{K_2} = \infty  ) \tilde{e}_{K_1} (\id y).
\end{equation}
For $z \in \partial K_1$ let $z' $ be the orthogonal projection of $z$ onto the line segment $[0,rx]$. Since $B(z',\rho-\epsilon)\subset K_2$ we have
\begin{equation}\label{e.hitincl}
\{\tilde{H}_{K_2} = \infty \}\subset \{\tilde{H}_{B(z',\rho-\epsilon)} = \infty \}.
\end{equation}
We now get that
\begin{align*}
P &\left( [B_1]\cap K_2=\emptyset\right)\stackrel{~\eqref{e.hitintest},~\eqref{e.hitincl}} {\leq} \int_{\partial K_1} P_{y} ( \tilde{H}_{B(y',\rho-\epsilon)} = \infty  ) \tilde{\e}_{K_1} (\id y) \\
& \stackrel{~\eqref{e.ballhitest}}{=} 1- \left( \frac{ \rho -\epsilon}{\rho} \right)^{d-2} = 1-(1-\epsilon/\rho)^{d-2} = O(1/r),
\end{align*}
where we recall that we made the choice $\epsilon=1/r$ for $r\ge \rho^{-1}$ above. Combining this with the fact that $\capac(K_1)=O(r)$ and~\eqref{e.o3eq} now gives~\eqref{e.o1eq}. Equations ~\eqref{e.visest1} and~\eqref{e.visest2} and~\eqref{e.o1eq} finally give that
\[
\pvis(r)  \leq O\left( r^{2(d-1)} \right) f(r)
\]
as $r \to \infty$. This establishes the upper bound in~\eqref{e.bounds4}
\end{proof}

 \section{Visibility for Brownian excursions in the unit disk}\label{s.BEproof}

In this section, we give the proof of Theorem~\ref{t.BEmain}. The method of proof we use here is an adaption of the method used in paper III of \cite{tykesson2008thesis}, which is an extended version of the paper \cite{benjamini2009visibility}. 
We first recall a result of Shepp \cite{shepp1972covering} concerning circle covering by random intervals. Given a decreasing sequence $(l_n)_{n\ge 1}$ of strictly positive numbers, we let $(I_n)_{n\ge 1}$ be a sequence of independent open random intervals, where $I_n$ has length $l_n$ and is centered at a point chosen uniformly at random on $\partial {\mathbb D}/(2\pi)$ (we divide by $2\pi$ since Shepps result is formulated for a circle of circumference $1$). Let $E:= \limsup_n I_n$ be the random subset of $\partial {\mathbb D}$ which is covered by infinitely many intervals from the sequence $(I_n)_{n\ge 1}$ and let $F:= E^c$. If $\sum_{n=1}^{\infty}l_n=\infty$ then $F$ has measure $0$ a.s.\ but one can still ask if $F$ is empty or non-empty in this case. Shepp \cite{shepp1972covering} proved that
\begin{thm}\label{Shepp}
$P(F= \emptyset) = 1$ if
\begin{equation}\label{e.sheppcond}
\sum_{n=1}^{\infty} \frac{1}{n^2} \e^{l_1+l_2+...+l_n} = \infty,
\end{equation}
and $P(F= \emptyset) = 0$ if the above sum is finite.
\end{thm}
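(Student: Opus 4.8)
\emph{Proof plan.} The plan is to combine a $0$--$1$ law with a second-moment (martingale) analysis of the uncovered set, after reducing the statement about $F$ to an ordinary covering dichotomy. Throughout write $L_n=l_1+\dots+l_n$ and normalise $\partial\D$ to have circumference $1$.

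\textbf{Step 1 ($0$--$1$ law and reduction to covering).} Since $\limsup_n I_n=\bigcap_{N}\bigcup_{n\ge N}I_n$ is unchanged when finitely many intervals are altered, the event $\{F=\emptyset\}=\{\limsup_n I_n=\partial\D\}$ is measurable with respect to the tail $\sigma$-algebra of the independent centres $(c_n)_{n\ge1}$, so Kolmogorov's $0$--$1$ law gives $P(F=\emptyset)\in\{0,1\}$. Writing $F_N=\big(\bigcup_{n>N}I_n\big)^c$, we have $F=\bigcup_N F_N$ with $F_N$ increasing, so $F=\emptyset$ iff for every $N$ the tail family $\{I_n\}_{n>N}$ covers $\partial\D$. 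The criterion \eqref{e.sheppcond} is tail-stable: the $n$-th term of the series for the shifted sequence $(l_{N+n})_{n\ge1}$ equals $e^{-L_N}\,n^{-2}e^{L_{N+n}}$, which is comparable (uniformly in $n\ge1$) to the $(N{+}n)$-th term $(N+n)^{-2}e^{L_{N+n}}$ of the original series since $n^{-2}\asymp(N+n)^{-2}$; hence the two series converge or diverge together. It therefore suffices to prove the covering dichotomy: $\bigcup_{n\ge1}I_n=\partial\D$ a.s.\ iff $\sum_n n^{-2}e^{L_n}=\infty$. In the convergent case this gives $F\supseteq F_0\neq\emptyset$ with positive probability, whence $P(F=\emptyset)=0$; in the divergent case every tail covers a.s., so $F=\emptyset$ a.s.\ by countable intersection.

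\textbf{Step 2 (martingale and second moment).} Let $p_n=\prod_{k\le n}(1-l_k)$ and define random measures on $\partial\D$ by $\mu_n(\id x)=p_n^{-1}\,\mathbbm{1}\{x\notin I_1\cup\dots\cup I_n\}\,\id x$. Conditioning on the first $n$ arcs and integrating over the $(n{+}1)$-st centre shows that $(\mu_n(A))_n$ is a non-negative martingale for every Borel $A$, so $\mu_n$ converges a.s.\ to a random measure $\mu_\infty$ carried by $U=\partial\D\setminus\bigcup_{n\ge1}I_n$. The total mass $\mu_n(\partial\D)$ has mean $1$, and by rotation invariance and independence of the arcs its second moment is
\[
\E\big[\mu_n(\partial\D)^2\big]=2\,p_n^{-2}\int_0^{1/2}\phi_n(t)\,\id t,\qquad
\phi_n(t)=\prod_{\substack{k\le n\\ l_k\le t}}(1-2l_k)\prod_{\substack{k\le n\\ l_k> t}}(1-l_k-t),
\]
where $\phi_n(t)$ is the probability that two points at circular distance $t$ are both uncovered by $I_1,\dots,I_n$. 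If $\sup_n\E[\mu_n(\partial\D)^2]<\infty$, the martingale is $L^2$-bounded, so $\mu_n(\partial\D)\to\mu_\infty(\partial\D)$ in $L^2$ with $\E[\mu_\infty(\partial\D)]=1$; hence $\mu_\infty\neq0$ and $U\neq\emptyset$ with positive probability, i.e.\ the circle is not covered. This handles the convergent direction, modulo the energy estimate of Step 3.

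\textbf{Step 3 (sharp energy estimate and the covering direction --- the hard part).} Everything now rests on showing that $\sup_n\E[\mu_n(\partial\D)^2]<\infty$ \emph{if and only if} $\sum_n n^{-2}e^{L_n}<\infty$. Taking logarithms in $\phi_n(t)/p_n^2$ and using that $(l_k)$ is decreasing, the leading contribution is $\exp\big(L_{m(t)}-t\,m(t)\big)$ with $m(t)=\#\{k:l_k>t\}$, so the energy is, up to bounded factors, $\int_0^{l_1}\exp\big(L_{m(t)}-t\,m(t)\big)\,\id t=\sum_{m\ge1}\int_{l_{m+1}}^{l_m}e^{L_m-tm}\,\id t$. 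The genuine difficulty --- and the heart of Shepp's theorem --- is to show that this quantity is comparable, \emph{uniformly over all admissible sequences} $(l_n)$, to $\sum_m m^{-2}e^{L_m}$; this is exactly where the sharp exponent $n^{-2}$ originates and where crude moment bounds (which yield only non-optimal sufficient conditions, as in earlier work of Dvoretzky, Erd\H{o}s and Kahane) are insufficient. Two further points require care: controlling the lower-order factors $\prod(1-2l_k)/(1-l_k)^2=\exp(-\sum l_k^2+o(1))$ so that they cannot affect convergence, and, in the divergent case, upgrading ``$\mu_\infty=0$ a.s.''\ to the genuinely stronger ``$U=\emptyset$ a.s.''\ (a nonempty uncovered set of vanishing natural measure must be ruled out). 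An alternative route that yields the exact series directly is Shepp's original argument, which decomposes the covering event along the last arc meeting a reference point into a renewal/ballot-type structure, from which the weight $n^{-2}$ emerges combinatorially and the divergent direction --- that a gap of any fixed length is eventually filled --- is obtained by a direct estimate rather than through $\mu_\infty$.
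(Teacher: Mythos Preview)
The paper does not prove this theorem at all: it is stated and attributed to Shepp \cite{shepp1972covering} as a known result from the literature, and is then used as a black box in the proof of Theorem~\ref{t.BEmain}. So there is no ``paper's own proof'' to compare against.

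As for your sketch on its own merits: Steps~1 and~2 are fine and standard. The $0$--$1$ law reduction and the tail-stability of the series are correct, and the $L^2$-martingale construction of the measure $\mu_\infty$ on the uncovered set is the right framework for the non-covering direction. But your Step~3 is not a proof; it is an honest description of what remains to be done. You explicitly flag the two genuine obstacles --- (i) showing that the energy $\sup_n \E[\mu_n(\partial\D)^2]$ is comparable to $\sum_m m^{-2}e^{L_m}$ uniformly over all decreasing sequences, and (ii) upgrading $\mu_\infty=0$ a.s.\ to $U=\emptyset$ a.s.\ in the divergent case --- without resolving either. The first is exactly the content of Shepp's paper and does not follow from the soft manipulations you indicate (the ``leading contribution'' heuristic hides the delicate summation-by-parts/rearrangement that produces the sharp $n^{-2}$); the second is a real issue, since a priori the uncovered set could be nonempty of zero natural measure, and one needs a separate argument (Shepp handles the covering direction directly rather than via the vanishing of $\mu_\infty$). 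So your proposal is a reasonable outline of the landscape, but as a proof it has a gap precisely at the place you yourself identify as ``the hard part''.
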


Theorem~\ref{Shepp} is formulated for open intervals, but  the result holds the same if the intervals are taken to be closed or half-open, see the remark on p.$340$ of~\cite{shepp1972covering}.
 
A special case of Theorem~\ref{Shepp}, which we will make use of below, is that if $c>0$ and $l_n=c/n$ for $n\ge 1$, then (as is easily seen from~\eqref{e.sheppcond})

\begin{equation}\label{e.ccond}
 P(F=\emptyset)=1 \mbox{ if and only if }c\ge 1.
\end{equation}
 
 Before we explain how we use Theorem \ref{Shepp}, we introduce some additional notation. If $\gamma\subset \bar{{\mathbb D}}$ is a continuous curve, it generates a "shadow" on the boundary of the unit disc. The shadow is the arc of $\partial {\mathbb D}$ which cannot be reached from the origin by moving along a straight line-segment without crossing $\gamma$. More precisely, we define the arc ${\mathcal S}(\gamma)\subseteq \partial {\mathbb D}$ by 

$$ {\mathcal S}(\gamma)=\{e^{i \theta}\,:\,[0,\e^{i\theta})\cap \gamma\neq \emptyset\},$$
and let $\Theta(\gamma)=\mathrm{length}({\mathcal S}(\gamma))$, where $\mathrm{length}$ stands for arc-length on $\partial {\mathbb D}$.

 We now explain how we use Theorem~\ref{Shepp} to prove Theorem~\ref{t.BEmain}. First we need some additional notation. For $\omega=\sum_{i\ge 1}\delta_{(w_i,\alpha_i)}\in \Omega_{{\mathbb D}}$ and $\alpha>0$ we write $\omega_{\alpha}=\sum_{i\ge 1}\delta_{(w_i,\alpha_i)}1\{\alpha_i\le \alpha\}.$ Then under ${\mathbb P}_{{\mathbb D}}$, $\omega_{\alpha}$ is a Poisson point process on $W_{{\mathbb D}}$ with intensity measure $\alpha \mu$. Each $(w_i,\alpha_i)\in \omega_{\alpha}$ generates a shadow ${\mathcal S}(w_i)\subseteq \partial {\mathbb D}$ and a corresponding shadow-length $\Theta(w_i)\in [0,2\pi]$. The process of shadow-lengths 
 
 $$\Xi_{\alpha}:=\sum_{(w_i,\alpha_i)\in \mathrm{supp}(\omega_{\alpha})} \delta_{\Theta(w_i)}1\{\Theta(w_i)<2\pi\}$$ 
 
 is a non-homogeneous Poisson point process on $(0,2\pi)$, and we calculate the intensity measure of this Poisson point process below, see~\eqref{e.shadowproc}. Since Brownian motion started inside $\D$ stopped upon hitting $\partial \D$ has a positive probability to make a full loop around the origin, there might be a random number of shadows that have length $2\pi$ which we have thrown away in the definition of $\Xi_{\alpha}$. However, this number will be a Poisson random variable with finite mean (see the paragraph above~\eqref{e.fullshadow}), so those shadows will not cause any major obstructions.  Now, for $i\ge 1$, we denote by $\Theta_{(i),\alpha}$ the length of the $i$:th longest shadow in $\mathrm{supp}(\Xi_{\alpha})$.   We then show that 
 
 \begin{equation}\label{e.sumtoshow}
 \sum_{n=1} \frac{1}{n^2} \e^{(\Theta_{(1),\alpha}+\Theta_{(2),\alpha}+...+\Theta_{(n),\alpha})/(2\pi)} = \infty \mbox{ a.s. }
 \end{equation}
 
 if $\alpha \ge \pi /4$ and finite a.s.\ otherwise, from which Theorem~\ref{t.BEmain} easily will follow using Theorem~\ref{Shepp}. 
 
 We now recall some facts of one-dimensional Brownian motion which we will make use of. If $(B(t))_{t\ge 0}$ is a one-dimensional Brownian motion, its range up to time $t>0$ is defined as 
 
 $$R(t) = \sup_{s \leq t} B(s) - \inf_{s \leq t} B(s).$$
 
 The density function of $R(t)$ is denoted by $h(r,t)$ and we write $h(r)$ for $h(r,1)$. An explicit expression of $h(r,t)$ can be found in \cite{feller1951range}. The expectation of $R(t)$ is also calculated in \cite{feller1951range}. In particular, 
 
 \begin{equation}\label{e.rangexpectation}
 E[R(1)]= 2\sqrt{ \frac{2}{\pi} }.
 \end{equation}

Let $(B(t))_{t\ge 0}$ be a one-dimensional Brownian motion with $B(0)=a\in \R$. Let $H_{a}=\inf\{t\ge 0\,:\,B(t)=0\}$ be the hitting time for the Brownian motion of the value $0$. The density function of $H_a$ is given by

\begin{equation}\label{e.hittingdistr}
f_a(t) = |a| \e^{-a^2/2t}/ \sqrt{2 \pi t^3},\mbox{ }t\ge 0.
\end{equation}

Now let $W=(W(t))_{t\ge 0}$ be a two-dimensional Brownian motion with $W(0)=x\in  \D\setminus \{0\}$ stopped upon hitting $\partial {\mathbb D}$. Observe that the distribution of the length of the shadow generated by $W$, $\Theta(W)$, depends on the starting point $x$ only through $|x|$. The distribution of $\Theta(W)$ might be known, but since we could not find any reference we include a derivation, which is found in Lemma~\ref{l.shadowdistr} below. We thank K. Burdzy for providing a version of the arguments used in the proof of the lemma.

%Define the random arc ${\mathcal S}(B)\subset \partial {\mathbb D}$ by 
%
%$$ {\mathcal S}(B)=\{e^{i \theta}\,:\,L_{\infty}(\theta)\cap B\neq \emptyset\},$$
%
%and let $\Theta(B)=\mathrm{length}({\mathcal S}(B))$. 

% Let  $\phi(x) =\e^{-x^2/2}/ \sqrt{2\pi}$, $x\in {\mathbb R}$, be the density of a standard normal random variable.
 \begin{lem}\label{l.shadowdistr}
Suppose that $W=(W(t))_{t\ge 0}$ is a two-dimensional Brownian motion started at $x \in \D\setminus \{0\}$, stopped upon hitting $\partial \D$. Then, for $\theta \in (0,2\pi]$,
\begin{equation}
P( \theta\leq  \Theta(W) \leq 2\pi ) =  \int_{\{(r,t)\,:\,r \sqrt{t} \geq \theta\}} f_{\log(|x|)}(t) h(r) \id t \id r .
\end{equation}
%for $\theta \in [0,2 \pi ]$ where 
%\begin{equation}
%f_a(t) = |a| \e^{-a^2/2t}/ \sqrt{2 \pi t^3}
%\end{equation}
%is the hitting time distribution of a one-dimensional Brownian motion hitting the value $a \in \R $, and $h(r)$ is the density of $R(1) = \sup_{s \leq 1} B(s) - \inf_{s \leq 1} B(s)$.
\end{lem}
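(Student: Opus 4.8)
The plan is to use the conformal invariance of Brownian motion together with a logarithmic change of coordinates that turns the disc into a half-strip and the radial direction into a line, so that the shadow length becomes the range of a one-dimensional Brownian motion run for a random time. Concretely, write $W(0)=x$ and consider the winding of $W$ around the origin. The map $z\mapsto \log z$ (a single-valued branch, tracked continuously along the path) sends $\D\setminus\{0\}$ conformally to the half-plane $\{\re\zeta<0\}$, sends $\partial\D$ to the line $\{\re\zeta=0\}$, and sends the origin to $-\infty$. Under a time change, $\tilde W(t)=\log W(t)$ is a planar Brownian motion started at $\log|x|$ (real part) with imaginary part $0$, killed when its real part hits $0$. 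Write $\tilde W=(U,V)$ where $U$ is the real part and $V$ the imaginary part; then $U$ and $V$ are independent one-dimensional Brownian motions, $U(0)=\log|x|<0$, $V(0)=0$, and the killing time is $T=H_{\log|x|}$, the hitting time of $0$ by $U$ started from $\log|x|$, whose density is $f_{\log(|x|)}(t)$ by~\eqref{e.hittingdistr}.

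Next I would identify the shadow in these coordinates. A boundary point $\e^{i\phi}$ lies in the shadow $\mathcal S(W)$ exactly when the ray $[0,\e^{i\phi})$ meets the curve $W$; under $z\mapsto\log z$ the ray $[0,\e^{i\phi})$ becomes the horizontal half-line $\{\re\zeta<0,\ \imag\zeta=\phi\}$ (modulo $2\pi$ in the imaginary part, accounting for winding). Hence $\e^{i\phi}\in\mathcal S(W)$ iff the process $V$ attains the value $\phi$ (mod $2\pi$) before time $T$, i.e.\ the angular set of the shadow is the projection modulo $2\pi$ of the range $[\inf_{s\le T}V(s),\ \sup_{s\le T}V(s)]$ of $V$ on $[0,T]$. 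As long as this range has length at most $2\pi$ — equivalently $\Theta(W)<2\pi$, meaning no full loop is made — the shadow length equals exactly the length of that range, namely $\Theta(W)=\sup_{s\le T}V(s)-\inf_{s\le T}V(s)$, the range of a standard one-dimensional Brownian motion over the time interval $[0,T]$. Conditioning on $T=t$ and using independence of $V$ from $U$ (hence from $T$), the range of $V$ over $[0,t]$ is distributed as $R(t)$, and by Brownian scaling $R(t)\eqdist \sqrt t\,R(1)$, so its density is $r\mapsto h(r,t)=t^{-1/2}h(r/\sqrt t)$. Therefore, for $\theta\in(0,2\pi]$,
\begin{align*}
P(\theta\le\Theta(W)\le 2\pi)&=\int_0^\infty f_{\log(|x|)}(t)\,P\bigl(R(t)\ge\theta\bigr)\,\id t=\int_0^\infty f_{\log(|x|)}(t)\int_{\{r:\,r\sqrt t\ge\theta\}}h(r)\,\id r\,\id t,
\end{align*}
after the substitution $r\mapsto r/\sqrt t$ in the inner integral, which is exactly the claimed formula with the region $\{(r,t):r\sqrt t\ge\theta\}$.

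The main obstacle I expect is making the correspondence ``shadow $=$ range of $V$'' fully rigorous, in two respects. First, one must justify the time change and the passage to $\log$-coordinates carefully: $z\mapsto\log z$ is only locally single-valued, so one works with the continuous lift of the argument along the path (equivalently, Brownian motion on the universal cover / the half-plane), and one checks that the stopped process really is planar Brownian motion up to a finite time change, which does not affect the range. Second, one must handle the boundary/winding bookkeeping: the definition of $\Theta(W)$ caps the shadow at $2\pi$, and $\{\Theta(W)<2\pi\}$ corresponds precisely to the event that the range of $V$ on $[0,T]$ is $<2\pi$; on that event the ``mod $2\pi$'' projection is injective and the arc-length of $\mathcal S(W)$ equals the range exactly, while on the complementary event (a full loop, which occurs with positive probability — see the discussion preceding~\eqref{e.fullshadow}) the two sides of the stated identity both concern only $\{\Theta(W)\le 2\pi\}$ and the formula is consistent. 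Once these points are settled, the computation above is routine, using only~\eqref{e.hittingdistr}, Brownian scaling for the range, and Fubini.
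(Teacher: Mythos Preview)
Your proposal is correct and follows essentially the same route as the paper: pass to logarithmic coordinates via conformal invariance so that the shadow length becomes the range of the imaginary part of a planar Brownian motion run until the real part hits $0$, then use independence of the coordinates together with Brownian scaling $R(T)\eqdist\sqrt{T}\,R(1)$ to obtain the integral. If anything, you are slightly more explicit than the paper about the time change not affecting the range and about the mod $2\pi$ bookkeeping on the event $\{\Theta(W)<2\pi\}$.
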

\begin{proof}
Without loss of generality, suppose that the starting point $x\in (0,1)$. We write $W(t)=s(t)\e^{\im \alpha(t)}$ where $\alpha(t)$ is the continuous winding number of $W$ around $0$, and $s(t)=|W(t)|$. Consider the process $(\phi(t))_{t\ge 0}$ living in the left half-plane defined by $\imag \phi(t)= \alpha(t)$ and $\re \phi(t) =\log(s(t))$.  For use below, we note that for $\beta\in [0,2\pi)$,
\begin{equation}\label{e.multifunc}
\log\left( \left\{ r \e^{\im \beta} : 0 < r < 1 \right\} \right) = \left\{ x+ \im (\beta+2\pi k) : x<0,\,k\in \Z \right\}.
\end{equation}
By conformal invariance of Brownian motion, the law of the image of $\phi$ is the same as the law of the image of a  two-dimensional Brownian $\tilde{W}=(\tilde{W}(t))_{t\ge 0}$ started at $\log(x) \in (-\infty,0)$ and stopped upon hitting $\{ z \in \C : \re z = 0\}$. 
%Now, first note that $P_x(\Theta = 2 \pi)>0$ since there is a positive probability that the Brownian motion has a winding number, $W$, greater or equal to $ 1$. 
Let 
\[
T= \inf \left\{ t >0 : \re \tilde{W}(t) = 0  \right\} \mbox{ and } R(t) = \sup_{s \leq t} \imag \tilde{W}(s) - \inf_{s \leq t} \imag  \tilde{W}(s).
\]
Using~\eqref{e.multifunc}, we see that whenever $\theta \in (0,2\pi]$,
\begin{equation}\label{e.confeq}
P( \theta \leq \Theta \leq 2 \pi   ) = P(  \theta  \leq   R(T) ).
\end{equation}
%and 
%\[
%P_x(  \Theta = 2 \pi   ) = P(     R(T) \geq   2 \pi ),
%\]
%since if a Brownian motion has a winding number $W \geq 1$ then the corresponding range will be greater than $2 \pi$. 
Moreover, $T$ and $\imag  \tilde{W}(t)$ are independent since $T$ is determined by $\re \tilde{W}(t)$ and $\re \tilde{W}(t)$ and $\imag \tilde{W}(t)$ are independent.  Since $T$ and $\imag \tilde{W}(t)$ are independent we have by Brownian scaling
\begin{equation}\label{e.distreq}
R(T) \eqdist \sqrt{T} R(1).
\end{equation}
Hence
\begin{equation}
P( \theta\leq \Theta \leq 2\pi)\stackrel{~\eqref{e.confeq},~\eqref{e.distreq}}{=} P(\sqrt{T} R(1)\ge \theta)= \int_{r \sqrt{t} \geq \theta} f_{|\log(x)|}(t) h(r) \id t \id r,
\end{equation}
finishing the proof of the lemma.
%where $h(r)$ is the density of $R(1)$ which is given by
%\[
%h(r) = \sqrt{2 /\pi} r^{-1}L'(r/2 \sqrt{t}) 
%\]
%where 
%\[
%L(z) = \frac{\sqrt{2 \pi}}{z} \sum_{k =1}^\infty \exp\left( -(2 k -1 )^2 \pi^2 /8z^2 \right)
%\]
%is the Kolmogorov-Smirnoff distribution function, see Equations 3.6-3.8 \cite{feller1951}.
\end{proof}   
%For $w \in W_\D$ let 
% ${\mathcal S}(w)=\{e^{i \theta}\,:\,L_{\infty}(\theta)\cap w\neq \emptyset\}$ and let $\Theta(w)=\mathrm{length}({\mathcal S}(w))$. 
 
In the next lemma, we calculate the intensity measure of $\Xi_{\alpha}$. For $\theta\in (0,2\pi]$ define 
\begin{equation}
A_\theta = \left\{ w \in W_\D : \theta \leq \Theta(w)\right\}.
\end{equation}

 \begin{lem}\label{l.ameasure}
 For $\theta \in (0,2 \pi]$
\begin{equation}
\mu( A_\theta  ) = \frac{8}{  \theta}.
\end{equation}
 \end{lem}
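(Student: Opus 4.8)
The plan is to compute $\mu(A_\theta)$ by unpacking the definition of the Brownian excursion measure as the $\epsilon\to 0$ limit of $2\pi\epsilon^{-1}P_{\sigma_{1-\epsilon}}$, reducing everything to a probability for a Brownian motion started near the boundary. Concretely, a trajectory started at radius $1-\epsilon$ has $\Theta(w)\geq\theta$ precisely when it winds enough before escaping to $\partial\D$, so I would write
\[
\mu(A_\theta)=\lim_{\epsilon\to 0}\frac{2\pi}{\epsilon}\,P_{\sigma_{1-\epsilon}}\bigl(\Theta(W)\geq\theta\bigr),
\]
and then invoke Lemma~\ref{l.shadowdistr} with $|x|=1-\epsilon$ to get
\[
P_{\sigma_{1-\epsilon}}\bigl(\Theta(W)\geq\theta\bigr)=\int_{\{(r,t):r\sqrt t\geq\theta\}}f_{\log(1-\epsilon)}(t)\,h(r)\,\mathrm dt\,\mathrm dr.
\]
Note that by rotational symmetry the starting distribution $\sigma_{1-\epsilon}$ only matters through the radius, so replacing $P_{\sigma_{1-\epsilon}}$ by $P_x$ with $|x|=1-\epsilon$ is harmless.

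Next I would carry out the $\epsilon\to 0$ asymptotics. Using $\log(1-\epsilon)=-\epsilon+O(\epsilon^2)$ and the explicit density \eqref{e.hittingdistr}, namely $f_a(t)=|a|\e^{-a^2/2t}/\sqrt{2\pi t^3}$, one finds that $\frac{2\pi}{\epsilon}f_{\log(1-\epsilon)}(t)$ converges (as $\epsilon\to 0$, for each fixed $t>0$) to $\frac{2\pi}{\sqrt{2\pi t^3}}=\sqrt{2\pi/t^3}$, since $|\log(1-\epsilon)|/\epsilon\to 1$ and the Gaussian factor $\e^{-\log(1-\epsilon)^2/2t}\to 1$. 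Hence, provided one can justify passing the limit inside the integral (a dominated-convergence argument: $|\log(1-\epsilon)|/\epsilon$ is bounded on a neighborhood of $0$, and $\e^{-a^2/2t}\le 1$, so $\frac{2\pi}{\epsilon}f_{\log(1-\epsilon)}(t)\le C t^{-3/2}$ uniformly, and this bound against $h(r)\,\mathrm dt\,\mathrm dr$ on the region $r\sqrt t\ge\theta$ — equivalently $t\ge\theta^2/r^2$ — is integrable), we obtain
\[
\mu(A_\theta)=\int_{\{(r,t):r\sqrt t\geq\theta\}}\sqrt{\frac{2\pi}{t^3}}\;h(r)\,\mathrm dt\,\mathrm dr.
\]
Now integrate out $t$ first: for fixed $r$, $t$ ranges over $[\theta^2/r^2,\infty)$, and $\int_{\theta^2/r^2}^\infty t^{-3/2}\,\mathrm dt=2\cdot(r^2/\theta^2)^{1/2}\cdot\, $, i.e. $=2r/\theta$. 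Thus the inner integral contributes $\sqrt{2\pi}\cdot 2r/\theta$, giving
\[
\mu(A_\theta)=\frac{2\sqrt{2\pi}}{\theta}\int_0^\infty r\,h(r)\,\mathrm dr=\frac{2\sqrt{2\pi}}{\theta}\,\E[R(1)].
\]
Finally, by \eqref{e.rangexpectation}, $\E[R(1)]=2\sqrt{2/\pi}$, so $\mu(A_\theta)=\frac{2\sqrt{2\pi}}{\theta}\cdot 2\sqrt{2/\pi}=\frac{8}{\theta}$, as claimed.

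I expect the main obstacle to be the rigorous justification of interchanging the limit $\epsilon\to 0$ with the double integral, i.e. producing a clean $\epsilon$-uniform integrable dominating function for $\frac{2\pi}{\epsilon}f_{\log(1-\epsilon)}(t)\,h(r)$ on the region $\{r\sqrt t\ge\theta\}$; one has to be a little careful near $t=0$ (where the region forces $r$ large, and one needs decay of $h(r)$) and for small $r$ (where $t$ is forced large, and $t^{-3/2}$ is integrable at infinity). A secondary, more bookkeeping-level point is confirming that the passage from the defining limit of $\mu$ to the corresponding limit of the event probabilities is legitimate here — but this is exactly the same type of computation already performed in \eqref{e.meascapac} for the event $\{\gamma\cap B(0,r)\ne\emptyset\}$, so it can be handled in the same spirit. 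Everything else is elementary calculus and the two cited facts \eqref{e.hittingdistr} and \eqref{e.rangexpectation}.
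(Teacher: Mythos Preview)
Your proof is correct and follows essentially the same route as the paper: reduce to $\lim_{\epsilon\to 0}\frac{2\pi}{\epsilon}P_{1-\epsilon}(\Theta\ge\theta)$ by rotational invariance, invoke Lemma~\ref{l.shadowdistr}, pass the limit inside the integral, and finish with the elementary computation $\int_{\theta^2/r^2}^\infty t^{-3/2}\,\id t=2r/\theta$ together with~\eqref{e.rangexpectation}. The only difference is in the interchange step: the paper factors out $-\log(1-\epsilon)/\epsilon\to 1$ and then observes that the remaining integrand $\e^{-\log^2(1-\epsilon)/2t}\sqrt{2\pi/t^3}\,h(r)$ is monotone increasing as $\epsilon\downarrow 0$, so monotone convergence applies directly and no dominating function is needed; your dominated-convergence argument is equally valid (and your integrability check of $t^{-3/2}h(r)$ over $\{r\sqrt t\ge\theta\}$ is precisely the final computation), but the MCT route sidesteps the concern you flagged about producing a uniform dominant.
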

\begin{proof}
%The Brownian excursion measure is defined as the limit
%\[
%\lim_{\epsilon \downarrow 0} \frac{1}{\epsilon }P_{\sigma_{1-\epsilon}} ( \cdot)
%\]
By the definition of $\mu$, we must show that
\[
\lim_{\epsilon\downarrow 0} \frac{2 \pi}{\epsilon }P_{\sigma_{1-\epsilon}} ( \theta \leq \Theta   )=\frac{8}{  \theta}.
\]
%First of all, it suffices to consider all $\epsilon< \epsilon'$ for some $\epsilon'>0$ arbitrary but fixed. 

We now get that
\begin{align*}
\frac{2 \pi }{\epsilon} P_{\sigma_{1-\epsilon}} ( \theta \leq \Theta) & = \frac{2 \pi}{\epsilon} \int_{\partial B(0,1-\epsilon)} P_z( \theta \leq \Theta) \sigma_{1-\epsilon} ( \id z)\\ &=  \frac{2 \pi}{\epsilon} P_{1-\epsilon}( \theta \leq \Theta)  = \frac{2 \pi}{\epsilon}\int_{r \sqrt{t} \geq \theta} f_{|\log(1-\epsilon)|}(t) h(r) \id t \id r,
\end{align*}
where we used rotational invariance in the second equality and Lemma~\ref{l.shadowdistr} in the last equality. We have

\begin{align*}
 \frac{2 \pi}{\epsilon}\int_{r \sqrt{t} \geq \theta} f_{|\log(1-\epsilon)|}(t) h(r) \id t \id r \stackrel{~\eqref{e.hittingdistr}}{=}  \frac{-\log(1-\epsilon)}{\epsilon}\int_{r \sqrt{t} \geq \theta}  \e^{-\log^2(1-\epsilon)/2t} \sqrt{ \frac{ 2 \pi}{ t^3} }   h(r) \id t \id r.
\end{align*}

%Now, let 
%\[
%g_\epsilon(t) = \frac{2 \pi}{\epsilon} f_{|\log (1-\epsilon) |}(t) = \frac{-\log(1-\epsilon)}{\epsilon \sqrt{2 \pi t^3}} \e^{-\log^2(1-\epsilon)/2t}/
%\]
%and note that
%\[
%\lim_{\epsilon \downarrow 0} g_\epsilon (t) = \frac{1}{\sqrt{2\pi t^3}}.
%\]
Note that $-\log(1-\epsilon)/ \epsilon  \to 1$ as $\epsilon \to 0$, $\e^{-\log^2(1-\epsilon)/2t}$ is monotone increasing in $\epsilon$, and $\e^{-\log^2(1-\epsilon)/2t}\to 1$ as $\epsilon\to 0$ for $t>0$. Hence, the monotone convergence theorem gives that
\begin{equation}
\mu( A_\theta ) = \lim_{\epsilon \downarrow 0} \frac{2 \pi}{\epsilon}  \int_{r \sqrt{t} \geq \theta} f_{|\log(1-\epsilon)|}(t) h(r) \id t \id r \stackrel{~\eqref{e.hittingdistr}}{=}\int_{r \sqrt{t} \geq \theta }\sqrt{ \frac{2 \pi}{ t^3}} h(r) \id r \id t.
\end{equation}
This integral is easily computed as
\begin{align}
 \int_{r \sqrt{t} \geq \theta} \sqrt{ \frac{2 \pi }{ t^3} } h(r) \id r \id t & =\sqrt{2 \pi} \int_0^\infty \int_{t \geq (\theta/r)^2} \frac{1 }{t^{3/2}}  \id th(r) \id r\\ 
 & = \sqrt{ 2\pi } \frac{2}{\theta} \int_0^\infty r h(r) \id r=  \sqrt{ 2\pi } \frac{2}{\theta} E[R(1)]\stackrel{~\eqref{e.rangexpectation}}{=} \frac{8}{ \theta}, 
\end{align}
%Where in the last equality we utilized equation 1.4 in \cite{feller1951}:
%\[
%E[R(1)] = 2 \sqrt{2/\pi}
%\]
finishing the proof of the lemma.
\end{proof}

\remark Lemma~\ref{l.ameasure} implies that $\mu(A_{2\pi})=4/\pi $. Hence, under ${\mathbb P}_{\D}$, $\omega_{\alpha}(A_{2\pi})$ is a Poisson random variable with mean $\alpha 4/\pi $. In particular,
\begin{equation}\label{e.fullshadow}
{\mathbb P}_{\D}(\omega_{\alpha}(A_{2\pi})=0)>0.
\end{equation}
\noindent
We will now use Lemma~\ref{l.ameasure} to prove Theorem~\ref{t.BEmain}.

\noindent
{\it Proof of Theorem~\ref{t.BEmain}}. Define the measure $m$ on $(0,2 \pi)$ by letting 
\[
m(A) =  \int_A \frac{8}{ t^2} \id t,\ A \in \B((0,2\pi)). %\mbox{ and }m(\{2\pi\})=2/\pi^2.
\]

\noindent
Lemma~\ref{l.ameasure} implies that under ${\mathbb P}_{{\mathbb D}}$, 

\begin{equation}\label{e.shadowproc}
\Xi_{\alpha}\mbox{ is a Poisson point process on }(0,2\pi)\mbox{ with intensity measure }\alpha m.
\end{equation}
\noindent
We now consider the Poisson point process on $(1/(2\pi),\infty)$ defined by

\begin{equation}\label{e.invproc}
\Xi_\alpha^{-1}:=\sum_{(w_i,\alpha_i)\in \mathrm{supp}(\omega_{\alpha})} \delta_{\Theta(w_i)^{-1}}.
\end{equation}
\noindent
Now note that for $ 1/(2 \pi)< t_1 < t_2$ we have that 
\[
m([1/t_2 , 1/t_1]) = 8  ( t_2 -t_1).
\] 
\noindent
Hence, $\Xi_\alpha^{-1}$ is a homogeneous Poisson point process on $(1/(2\pi),\infty)$ with intensity $8\alpha $. Now let $\Delta_1=1/\Theta_{(1),\alpha}$ and for $n\ge 2$ let

$$\Delta_n:=1/\Theta_{(n),\alpha}-1/\Theta_{(n-1),\alpha}.$$
\noindent
Then $\Delta_{n}$ is a sequence of i.i.d.\ exponential random variables, with mean $1/(8 \alpha)$.  Since $$1/\Theta_{(n),\alpha} = \sum_{i=1}^n \Delta_i,$$ we get that

\begin{equation}\label{e.infofteq1}
 P\left(\left|\frac{1}{\Theta_{(n),\alpha}}- \frac{n  }{8 \alpha}\right | > n^{3/4} \text{ i.o. }\right) = 0.
\end{equation}
\noindent
Since

\[
 \left| \Theta_{(n),\alpha} - \frac{8 \alpha}{n  }\right| = \left |\frac{1/\Theta_{(n),\alpha} - n  /(8 \alpha)}{n /(8 \alpha  \Theta_{(n),\alpha})} \right|,
  \]
and $1/\Theta_{(n),\alpha}> c n$ for all but finitely many $n$ for some constant $c>0$ a.s., 
Equation~\eqref{e.infofteq1} implies that for some constant $c'(\alpha)<\infty$,

\begin{equation}\label{e.errsummable}
P\left(\left| \Theta_{(n),\alpha} - \frac{8 \alpha}{n  }\right|\le c'(\alpha) n^{-5/4}\mbox{ for all but finitely many }n \right)=1.
\end{equation}

Let $Y_{n,\alpha}=\sum_{i=1}^n  \Theta_{(i),\alpha}-\sum_{i=1}^n \frac{8 \alpha}{i }$. From~\eqref{e.errsummable} and the triangle inequality we see that a.s., $Y_{\infty,\alpha}:=\lim_{n\to\infty} Y_{n,\alpha}$ exists and $|Y_{\infty,\alpha}|<\infty$ a.s.\ Hence,  
\[ 
\tilde{Y}_{\infty,\alpha}:=  \lim_{n\to \infty}\left(\sum_{i=1}^n\frac{\Theta_{(i),\alpha}}{2\pi}-\frac{4 \alpha \log(n)}{\pi }\right)
\]
 exists and is finite a.s. Hence, the sum in~\eqref{e.sumtoshow} is finite a.s.\ if $\alpha<\pi /4$ and infinite a.s.\ if $\alpha\ge \pi /4$. Let $\tilde{V}_\infty^{\alpha}$ denote the event that there is some $\theta\in [0,2\pi)$ such that $[0,\e^{i\theta})$ intersects only a finite number of trajectories in the support of $\omega_{\alpha}$. The above, together with~\eqref{e.fullshadow}, shows that ${\mathbb P}_\D(\tilde{V}_\infty^{\alpha})=1$ if $\alpha<\pi /4$ and ${\mathbb P}_\D(\tilde{V}_\infty^{\alpha})=0$ if $\alpha\ge \pi /4$. It remains to argue that ${\mathbb P}_{\D}(V_\infty^{\alpha})>0$ when $\alpha<\pi/4$. So now fix $\alpha<\pi/4$. Let $\tilde{V}_{\infty,R}^{\alpha}$ be the event that there is some $\theta\in [0,2\pi)$ such that $[0,\e^{i\theta})$ intersects only trajectories in the support of $\omega_{\alpha}$ which also intersect the ball $B(0,R)$. If $\tilde{V}_\infty^{\alpha}$ occurs, then for some random $R_0<1$, the event $\tilde{V}_{\infty,R}^{\alpha}$ occurs for every $R\in(R_0,1)$. Hence for some $R_1<1$, ${\mathbb P}_\D(\tilde{V}_{\infty,R_1}^{\alpha})>0.$ Suppose that $\bar{\omega}\in \Omega_{\D}$ and write 

$$\hat{\omega}_{\alpha}= 1_{W_{B(0,R_1)}}\bar{\omega}_\alpha+1_{W_{B(0,R_1)}^c}\omega_\alpha.$$

Observe that if $\omega_{\alpha}\in \tilde{V}_{\infty,R_1}^{\alpha}$ and $\bar{\omega}_{\alpha}(W_{B(0,R_1)})=0$, then $\hat{\omega}_{\alpha}\in V_\infty^{\alpha}$. Hence 
$${\mathbb P}_{\D}^{\otimes 2}(\hat{\omega}_{\alpha}\in V_\infty^{\alpha})\ge {\mathbb P}_{\D}(\tilde{V}_{\infty,R_1}^{\alpha}){\mathbb P}_{\D}(\bar{\omega}_{\alpha}(W_{B(0,R_1)})=0)>0.$$

The result follows, since $\omega_{\alpha}$ under ${\mathbb P}_{\D}$ has the same law as $\hat{\omega}_{\alpha}$ under ${\mathbb P}_{\D}^{\otimes 2}$.\qed

\noindent \\ {\bf Acknowledgements: }We thank Johan Jonasson for comments on parts of the paper. We thank 
Krzysztof Burdzy for providing a version of the argument of Lemma~\ref{l.shadowdistr}.

\newpage

    				%%%%BIBLIOGRAPHY %%%%
\bibliography{references} % references.bib
\end{document}